\newcommand{\bb}[1]{\mathbb{#1}} 
\newcommand{\cali}[1]{\mathcal{#1}} 
\newcommand{\abs}[1]{\ensuremath{\left| #1 \right|}} 
\newcommand{\set}[1]{\ensuremath{\lbrace #1 \rbrace}} 
\newtheorem{thm}{Theorem}[section]
\newtheorem{lemma}{Lemma}[section]
\newtheorem{defn}[lemma]{Definition}
\newtheorem{prop}[lemma]{Proposition}
\definecolor{purple}{RGB}{200, 0, 255}
\definecolor{orange}{RGB}{255,140, 0}
\tikzstyle{flowNode} = [rectangle, rounded corners, minimum width = 3cm, minimum height = 1cm, text width = 3cm, text centered, draw = black]
\newcommand{\propL}{end-normal }
\begin{document}
\title[FFTP Implies Autostackability]{The Falsification by Fellow Traveler Property Implies Geodesic Autostackability} 
\author[A.~DeClerk]{Ash DeClerk}
\address{Department of Mathematics\\
        University of Nebraska\\
         Lincoln NE 68588-0130, USA}
\email{declerk.gary@huskers.unl.edu}
\thanks{2020 {\em Mathematics Subject Classification}. 
  20F65; 20F10, 68Q42}
\begin{abstract}
Groups with the falsification by fellow traveler property are known to have solvable word problem, but they are not known to be automatic or to have finite convergent rewriting systems. In this paper, we show that these groups admit a generalization of the two properties; namely, they are geodesically autostackable. As a key part of proving this, we show that a wider class of groups, namely groups with a weight non-increasing synchronously regular convergent prefix-rewriting system, have a bounded regular convergent prefix-rewriting system.
\end{abstract}

\maketitle
\renewcommand*{\thethm}{\Alph{thm}}

\section{Introduction} \label{intro}
For a group $G$ with a finite inverse-closed set $A$ which generates $G$, the falsification by fellow traveler property (FFTP) is a purely geometric property of the Cayley graph which was introduced by Neumann and Shapiro \cite{NS1995}.

\begin{defn}
\cite{NS1995} The pair $(G, A)$ has the \emph{falsification by fellow traveler property (FFTP)} if there exists a constant $k$ such that for each non-geodesic word $u$ in the generators and their inverses, there exists a word $v$ such that $\abs{v} < \abs{u}$, $u =_G v$, and $u$ and $v$ $k$-fellow travel. That is, $d(u(n), v(n)) \leq k$ for all $n \in \bb{N}$. Such a word $v$ is called a \emph{witness} of $u$.
\end{defn}

Cannon gave an example showing that FFTP depends on the generating set for a group \cite{NS1995}, but many choices of group $G$ have at least one generating set $A$ such that the pair $(G, A)$ has FFTP. Some examples include virtually abelian groups and geometrically finite hyperbolic groups \cite{NS1995}, Garside groups \cite{Holt2010}, Artin groups of large type \cite{HoltRees2012}, Coxeter groups \cite{Noskov2001}, and groups acting cellularly on locally finite CAT(0) cube complexes with a simply transitive action on the vertices \cite{Noskov2000}. Certain particularly nice groups are known to have FFTP for every choice of generating set; abelian groups \cite{NS1995} and finite groups are two examples, though even virtually abelian groups can have generating sets such that $(G, A)$ does not have FFTP \cite{NS1995} --- we will use one such example in Section \ref{noAConverse}. Furthermore, pairs $(G, A)$ with FFTP have particularly nice properties, including regular geodesic language \cite{NS1995}, at most quadratic isoperimetric inequality \cite{Elder2005}, and almost convexity \cite{Elder2005}.

Autostackability was introduced by Brittenham, Hermiller, and Holt \cite{BHH2014} as a generalization of automaticity and the property of having a finite convergent rewriting system, and is defined in terms of the Cayley graph $\Gamma_{G, A}$ for a group $G$ with inverse-closed generating set $A$. Given a spanning tree $T$ in $\Gamma_{G, A}$, a \emph{regular bounded flow function} for the triple $(G, A, T)$ is a function $\Phi$ from the set of directed edges of $\Gamma_{G, A}$ to the set of directed paths in $\Gamma_{G, A}$ such that:
\begin{itemize}
\item (same endpoints) for any directed edge $e$ of $\Gamma_{G, A}$, the path $\Phi(e)$ has the same initial vertex and the same terminal vertex as $e$;
\item (boundedness) there exists a constant $k \geq 0$ such that for each edge $e$ of $\Gamma_{G, A}$, the path $\Phi(e)$ has length at most $k$;
\item (fixed tree edges) if the undirected edge underlying $e$ lies in the tree $T$, then $\Phi(e) = e$;
\item (termination) there is no infinite sequence of edges $e_1, e_2, e_3, \dots$ such that each $e_{i + 1}$ lies in the path $\Phi(e_i)$ and each $e_i$ lies outside the spanning tree $T$; and
\item (regularity) the language of triples $(\text{nf}_{\Phi}(\iota(e)), \text{label}(e), \text{label}(\Phi(e)))$ is a synchronously regular language (where $\text{nf}_{\Phi}(v)$ is the label of the unique non-backtracking path from $1_G$ to $v$ in $T$, $\iota(e)$ is the starting vertex of the edge $e$, and $\text{label}(p)$ is the label of the path $p$).
\end{itemize} (See Section \ref{notation} for notation and background on regular languages.)

\begin{defn}
\cite{BHH2014} A group is \emph{autostackable} if it admits a regular bounded flow function.
\end{defn}

Many classes of groups have been shown to be autostackable for certain generating sets, including all prefix-closed automatic groups \cite{BHH2014} and all groups with a finite convergent rewriting system \cite{BHH2014}, Thompson's group $F$ \cite{CGHJS2020}, closed 3-manifold groups \cite{BHS2018}, the Baumslag-Gersten group \cite{HMP2018}, and Stallings' finitely presented group which is not of type $FP_3$ \cite{BHJ2016}. 

Throughout this paper, we will also consider convergent prefix-rewriting systems:

\begin{defn} \label{CPRS} \cite{BHH2014} A \emph{convergent prefix-rewriting system (CP-RS)} for a group $G$ is a pair consisting of a finite alphabet $A$ and a set $R \subseteq A^\ast \times A^\ast$ of ordered pairs of words over $A$ such that $G$ is presented as a monoid by $$G = Mon\langle A \mid \set{u = v \mid (u, v) \in R} \rangle$$ and the set of rewritings $uw \to_R vw$ with $(u, v) \in R$ and $w \in A^\ast$ satisfies:
\begin{itemize}
\item (termination) there is no infinite chain of rewritings $$w_1 \to_R w_2 \to_R w_3 \to_R \cdots$$ and
\item (normal forms) each element of $G$ is represented by exactly one irreducible word (i.e. a word which cannot be rewritten) over $A$.
\end{itemize}
\end{defn}

A CP-RS $R$ is said to be \emph{length non-increasing} if for each $(u, v) \in R$, we have $\abs{v} \leq \abs{u}$ and \emph{bounded} if there exists a constant $k$ such that for each pair $(l, r)$ in $R$, we have $l = pl'$, $r = pr'$, and $\abs{l'} + \abs{r'} \leq k$. We will say that a CP-RS is \emph{\propL} if for all pairs $(u, v) \in R$ with $u = wl$ for some irreducible word $w \in A^\ast$ and some letter $l \in A$, $\text{last}(v) = \text{last}(\text{nf}_R(v))$. A CP-RS is \emph{(synchronously) regular} if the set $R$ is a (synchronously) regular language. If we have a system of positive finite weights on $A$ (that is, a function $wt: A \to \mathbb{R}^+$), then we define $wt(a_1a_2 \cdots a_n) = \sum_{i = 1}^n wt(a_i)$; then $R$ is \emph{weight non-increasing} if for each $(u, v) \in R$ we have $wt(v) \leq wt(u)$. In particular, a length non-increasing CP-RS is also weight non-increasing, where $wt(a) = 1$ for all $a \in A$.

Bounded synchronously regular convergent prefix-rewriting systems and autostackable groups are intimately connected. In particular:

\begin{prop}\label{brcprs}\cite{BHH2014} A group $G$ is autostackable if and only if $G$ admits a bounded synchronously regular convergent prefix-rewriting system. Moreover, there is an algorithm which, given a bounded synchronously regular convergent prefix-rewriting system $R$, can construct an autostackable structure such that the normal forms of this autostackable structure are the same as the normal forms of $R$. \end{prop}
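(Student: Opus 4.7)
The statement is an equivalence with an explicit normal-form-preserving construction in one direction, so I would build constructions in both directions. The forward direction (CP-RS $\Rightarrow$ autostackable) is the algorithmic half that also establishes the ``moreover'' clause.

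For the forward direction, I would use $R$ itself to define the spanning tree $T$: let $T$ be the tree whose unique non-backtracking path from $1_G$ to any vertex $v$ is labelled by $\text{nf}_R(v)$; this automatically gives $\text{nf}_\Phi = \text{nf}_R$. If a directed edge $e\colon v \to va$ is in $T$, set $\Phi(e) = e$. Otherwise $\text{nf}_R(v)\cdot a$ is reducible, and because $\text{nf}_R(v)$ is itself irreducible and the system is prefix-rewriting, the only applicable rule has left-hand side equal to the entire word $\text{nf}_R(v)\cdot a$; write this rule as $(pl',pr')$ with $|l'|+|r'| \leq k$ using boundedness. Define $\Phi(e)$ to be the path that backtracks from $v$ along $(l')^{-1}$ (legal by inverse-closure of $A$) to the vertex corresponding to $p$, then reads the letters of $r'$ forward to arrive at $pr' = va$. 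The same-endpoints, tree-fixed, and boundedness axioms are immediate from this construction.

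For the converse, given autostackable data $(T,\Phi)$, I would let $R$ consist, for each non-tree edge $e\colon v \to va$, of the rule $(\text{nf}_\Phi(v)\cdot a,\ \text{nf}_\Phi(v)\cdot \text{label}(\Phi(e)))$. Taking common prefix $p = \text{nf}_\Phi(v)$ shows this is bounded by $k+1$. Irreducible words under this system are exactly the tree geodesics $\text{nf}_\Phi(v)$, so normal forms agree with the tree in this direction as well.

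The two nontrivial verifications are termination and regularity. Termination of $\Phi$ follows from termination of $R$: any infinite chain of non-tree edges $e_1,e_2,\ldots$ with $e_{i+1}$ on $\Phi(e_i)$ would extract an infinite sequence of prefix-reductions in $R$, because each non-tree edge along $\Phi(e_i)$ corresponds to a further reducible word whose rewrite is recorded by the next flow step; conversely, termination of $R$ in the reverse construction is inherited from termination of $\Phi$ together with well-foundedness of tree-ancestor depth. The main obstacle, I expect, is the regularity step: one must translate between the synchronously regular language of pairs $(l,r)$ from $R$ and the synchronously regular language of triples $(\text{nf}_\Phi(\iota(e)), \text{label}(e), \text{label}(\Phi(e)))$ for $\Phi$. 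This requires using the bound $k$ to isolate the active suffixes $(l',r')$ from the shared prefix $p$, recognizing irreducibility by forbidding any rule left-hand side as a prefix (a regular condition since $R$ is SR), and then invoking standard closure properties of SR languages (intersection, padding, and bounded splicing) to pass between the two formats.
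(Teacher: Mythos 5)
First, a point of comparison: the paper does not actually prove this proposition — it is quoted from \cite{BHH2014}, and the only piece of its proof visible here is the explicit flow function built from a bounded regular CP-RS that is recalled in the proof of Theorem \ref{extraToGeo}. Your forward construction is essentially that one (spanning tree of $R$-normal forms, flow path obtained by backtracking to a common prefix and reading the replacement suffix), so the overall strategy is right, but as written the path is mislabelled: since the left-hand side is $\text{nf}_R(v)a = pl'$, the suffix $l'$ ends in the letter $a$, so backtracking from $v = \overline{\text{nf}_R(v)}$ along $(l')^{-1}$ does not reach $\overline{p}$, and the path you describe does not terminate at $va$. One must instead write $\text{nf}_R(v) = pm$ (so $l' = ma$, e.g.\ by taking the maximal common prefix) and use the path labelled $m^{-1}r'$, exactly as in the displayed formula for $\Phi$ in the proof of Theorem \ref{extraToGeo}. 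You also need a canonical choice of rule when several rules of $R$ share the left-hand side $\text{nf}_R(v)a$, made in a way that preserves synchronous regularity of the triple language; this is standard but should be said.

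The genuine gap is termination, in both directions. You assert that an infinite flow chain $e_1, e_2, \dots$ would ``extract an infinite sequence of prefix-reductions in $R$,'' but the word $\text{nf}_R(\iota(e_{i+1}))\,\text{label}(e_{i+1})$ attached to $e_{i+1}$ is in general not obtained from $\text{nf}_R(\iota(e_i))\,\text{label}(e_i)$ by any $R$-rewriting: $e_{i+1}$ sits at an interior vertex of $\Phi(e_i)$, so the two words typically represent different group elements, whereas $R$-rewriting preserves the group element. Termination of $\to_R$ forbids infinite chains of honest rewritings, but a flow chain lives in the mixed relation ``rewrite, pass to a proper prefix of the result, re-normalize, append a letter,'' and that relation is not well-founded merely because $\to_R$ is. Closing this gap is where the real work of the proposition lies; compare the termination subsection of the proof of Theorem \ref{lniToAutost} in this paper, where an explicit well-founded order on edges (built from weights together with $R'$-rewriting sequences) has to be constructed for precisely this kind of claim. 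The same objection applies to your converse direction, where termination of the extracted CP-RS is said to be ``inherited'' from termination of $\Phi$ plus tree depth: an arbitrary word is not a normal form followed by one letter, and showing the induced prefix-rewriting terminates again needs a Noetherian-induction argument rather than an appeal to ancestry in $T$. By contrast, the regularity step you single out as the main obstacle is comparatively routine once the bound $k$ is available, via the closure properties used in the ``Synchronously regular'' subsection of the proof of Theorem \ref{lniToAutost}.
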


We extend this result to groups with weight non-increasing synchronously regular convergent prefix-rewriting systems with the following theorem:

\begin{thm} \label{lniToAutost} Suppose $G = \langle A \rangle$ has a weight non-increasing synchronously regular CP-RS $R$. Then $G$ has an autostackable structure which has the same normal forms as $R$. \end{thm}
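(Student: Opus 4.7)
The plan is to use Proposition~\ref{brcprs}: it suffices to construct from $R$ a bounded synchronously regular CP-RS for $G$ with the same irreducible words (and hence normal forms) as $R$. I would build this in two main steps.

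First, I would pass from $R$ to the system of minimal rules
\[ R_{\min} = \set{(wa,\, \text{nf}_R(wa)) : w \in A^\ast \text{ is } R\text{-irreducible and } wa \text{ is } R\text{-reducible}}. \]
Standard arguments show $R_{\min}$ is a convergent prefix-rewriting system with the same irreducible words as $R$: the shortest reducible prefix of any reducible word necessarily has this shape (since prefixes of irreducibles are irreducible), and every $R_{\min}$-rewrite is a finite composition of $R$-rewrites, so termination is preserved. Verifying synchronous regularity of $R_{\min}$ requires an automata-theoretic argument using the synchronous regularity of $R$ together with the fact that the set of $R$-irreducible words is regular (it is the complement of the set of words having a prefix in the projection of $R$ onto its first coordinate). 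It will also be convenient to ensure the resulting system is \propL by modifying each right-hand side so that it already ends with the last letter of its normal form; this is a local adjustment preserving both weight non-increasing and synchronous regularity, since ``the last letter of the $R$-normal form of a word'' is a finite-state property.

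Second, and this is the heart of the proof, I would show that the resulting minimal end-normal system $R_0$ is bounded: for each rule $(wa, v) \in R_0$ with $w$ irreducible, I want the longest common prefix of $wa$ and $v$ to have length at least $|w| - K$ for a uniform constant $K$. The weight non-increasing hypothesis forces $|v|$ to be at most a constant multiple of $|wa|$; end-normality stabilizes the tail of the reduction sequence from $wa$ to $v$; and a pumping argument on the synchronously regular description of $R_0$ converts any putative violation of boundedness into an infinite pumpable family of rules. The plan is to argue that, under weight non-increasing plus end-normal control, the pumped ``middle'' of such a family must represent the identity, so iterating one of these rules produces an infinite rewriting chain and contradicts termination.

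The main obstacle is precisely this boundedness argument in the second step. The interaction between the finite-state description of $R_0$ (via pumping), the weight control on growth of right-hand sides, and the end-normal stabilization of the tail must be combined carefully to extract a uniform constant bound on $|l'| + |r'|$. Once boundedness is in hand, Proposition~\ref{brcprs} immediately delivers the autostackable structure with the same normal forms as $R$.
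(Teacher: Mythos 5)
There is a genuine gap, and it sits exactly where you place ``the heart of the proof.'' Your plan keeps (a normal-form version of) the given right-hand sides and then tries to prove that the resulting minimal system is bounded. But such systems are simply not bounded in general, so no combination of pumping, weight control, and termination can rescue that step: the paper's own Section 5 example is a counterexample. For $G = \bb{Z}^2 \rtimes \bb{Z}_2$ with $A = \set{a, t}$, the rules include $a^ita^jta \to a^{i+1}ta^jt$ for $i \geq 0$, $j \neq 0$; these are length non-increasing, minimally reducible on the left, end-normal, have normal forms as right-hand sides, and form a synchronously regular CP-RS, yet the longest common prefix is $a^i$, so $\abs{l'} + \abs{r'} = 2\abs{j} + 6$ is unbounded. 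Thus boundedness must be \emph{manufactured} by changing the right-hand sides, not deduced. The paper's proof does exactly this: starting from $R' = R \cap (\cali{N}A \times A^\ast)$ (which keeps the \emph{original} right-hand sides, so its regularity is immediate), it uses the finitely many short rules verbatim ($S_1$), and for a long rule $(u_1u_2l, v_1v_2)$ with $u_2l$ a suffix of weight just above a threshold $N$ it builds the new rule $(u_1u_2l,\ u_1\,\text{nf}_{R'}(u_2lv_2^{-1})\,v_2)$, which shares the long prefix $u_1$ with its left side. The crucial quantitative input is a state-counting argument on an FSA for $R'$ showing $\abs{\text{nf}_{R'}(u_2lv_2^{-1})} \leq 2k - 1$, so the new system is bounded by construction; the real work is then re-verifying that it still presents $G$ (an induction on weight), that it terminates (via a bespoke well-founded order on edges of $\Gamma_{G,A}$, since the new rules are no longer instances of $R'$-rewriting), and that the normal forms are unchanged.

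Two smaller issues in your first step. Replacing right-hand sides by full normal forms, $R_{\min} = \set{(wa, \text{nf}_R(wa))}$, is not obviously synchronously regular: the graph of $u \mapsto \text{nf}_R(u)$ is computed by iterated rewriting and need not be recognizable by a synchronous automaton, which is precisely why the paper only does this for the finite set of left sides of weight less than $N$. Likewise, your claim that ``the last letter of the $R$-normal form of a word'' is a finite-state property is unsupported; fortunately end-normality is irrelevant to this theorem (it is only needed for the geodesic refinement in Theorem \ref{extraToGeo}), so that adjustment should be dropped here rather than repaired.
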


We will also discuss a more restrictive version of autostackability, namely geodesic autostackability:

\begin{defn}\label{geoAutost} \cite{BH2015} Define $\alpha: E(\Gamma_{G, A}) \to \bb{Q}$ by $$\alpha(e) = \frac{1}{2}\left( d_{G, A}(1_G, \iota(e)) + d_{G, A}(1_G, \tau(e))\right).$$
A group $G$ is \emph{geodesically autostackable} if $G$ has an autostackable structure with normal form set $\cali{N}$ and flow function $\Phi$ such that each element of $\cali{N}$ labels a geodesic in $\Gamma_{G, A}$, and whenever $e', e \in E(\Gamma_{G, A})$ with $\Phi(e') \neq e'$ and $e'$ an edge in the path $\Phi(e)$, we have $\alpha(e') < \alpha(e)$.
\end{defn}

Geodesically autostackable structures are significantly more restrictive than autostackable structures. Geodesically autostackable structures provide a regular set of geodesic normal forms and require that the pair $(G, A)$ is almost convex:

\begin{prop}\cite{BH2015} A geodesically autostackable group $G = \langle A \rangle$ is almost convex with respect to the generating set for which $G$ has a geodesically autostackable structure. \end{prop}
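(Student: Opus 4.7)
The plan is to use the bounded flow function to build short paths inside each ball $B_n$. Let $\Phi$ be the flow function with length bound $k$ and $T$ the spanning tree of geodesic normal forms. A useful preliminary observation is that $\Phi(e) = e$ if and only if $e \in T$: the termination condition prevents a non-tree edge from being fixed, since otherwise the constant sequence $e, e, e, \ldots$ would be an infinite non-tree chain. Consequently, the strict $\alpha$-decrease condition applies to \emph{every} non-tree edge occurring in any $\Phi$-expansion.

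First treat $d_{G,A}(g, h) = 1$. Let $e$ be the edge between $g$ and $h$, so $\alpha(e) = n$. Any edge $e'$ of $\Phi(e)$ with at least one endpoint above level $n$ has $\alpha(e') \ge n + \tfrac{1}{2} > n$, and the $\alpha$-decrease condition therefore forces $e'$ to be a tree edge. Hence any portion of $\Phi(e)$ rising above $S_n$ is a walk in $T$ among vertices of level greater than $n$. The induced subgraph of $T$ on $\bigcup_{m>n} S_m$ is a forest whose components are the subtrees rooted at individual $S_{n+1}$-vertices, so the entry and exit points of any such excursion (each at level $n+1$) must lie in the same component and hence coincide, making the excursion a closed walk that collapses under backtracking removal. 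The backtracking-reduced $\Phi(e)$ then lies entirely in $B_n$ and has length at most $k$.

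For $d_{G,A}(g, h) = 2$, choose a length-2 path $g \to v \to h$. If $v \in B_n$ we already have a length-2 path in $B_n$. Otherwise $v \in S_{n+1}$ and has a unique tree parent $w \in S_n$. Consider the edges $e_1 \colon g \to v$ and $e_2 \colon v \to h$, each with $\alpha = n + \tfrac{1}{2}$. Any non-tree edge of $\Phi(e_1^{-1})$ has $\alpha \le n$, forcing both endpoints into $B_n$; in particular, the first edge of $\Phi(e_1^{-1})$ leaves $v \in S_{n+1}$ and must therefore be a tree edge. Since $v$ is the sole level-$(n+1)$ vertex of its component in the above-$B_n$ forest, the same backtracking argument as in the $d = 1$ case forces the reduced $\Phi(e_1^{-1})$ to begin with the descent $v \to w$ and to continue as a path from $w$ to $g$ lying in $B_n$ of length at most $k - 1$. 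Applying this reasoning to $\Phi(e_2)$ yields a path from $w$ to $h$ in $B_n$ of length at most $k - 1$, and concatenating the reverse of the first path with the second produces a path from $g$ to $h$ in $B_n$ of length at most $2(k-1)$.

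Combining the two cases gives almost convexity with constant $K = 2k$. The principal technical step, and where the most care is needed, is the backtracking argument showing that above-sphere excursions in $\Phi$-expansions collapse; this depends crucially on the interplay between the $\alpha$-decrease condition for non-tree edges and the uniqueness of parents in the geodesic normal form tree.
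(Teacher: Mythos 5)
Your proof is correct, but it takes a genuinely different route from the argument the paper is leaning on. Note first that the paper does not prove this proposition itself: it is quoted from \cite{BH2015}, and the closest argument actually written out in the paper is the proof of Theorem \ref{lniToAlmCon}, modeled on \cite[Theorem 4.4]{BH2015} and \cite[Theorem B]{HM1997}. That argument works with the bounded prefix-rewriting system rather than the flow function directly: it connects $g$ to $h$ by repeatedly rewriting at the last letter, producing a chain $w_0u_0a_0 \to w_0v_0a_1 =_G w_1u_1a_1 \to \cdots$, with each piece $u_i^{-1}v_i$ of bounded length kept inside $B(n)$ by the length non-increasing property, and with the number of pieces bounded by $|A|$ because the letters $a_i$ cannot repeat; the resulting constant is of the order $(4k+4)|A|$. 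You instead apply $\Phi$ only once per relevant edge, using the \emph{same endpoints} axiom so that no iteration over last letters is needed, and then confine the path to $B(n)$ by combining the strict $\alpha$-decrease for non-fixed edges with the geometry of the geodesic normal form tree: since normal forms are geodesics, parents sit one level lower, each component of the part of $T$ above $B(n)$ meets $S(n+1)$ in a single vertex, and a freely reduced excursion above the sphere made of tree edges would have to enter and leave through the same parent edge, hence cannot exist. Your two supporting observations are exactly the points needing care and are handled correctly: the constant-sequence appeal to termination shows fixed edges are tree edges, and the collapse argument is valid. What your route buys is directness and a sharper constant ($2k$ versus roughly $(4k+4)|A|$), together with a clear view of where each geodesic hypothesis enters; what the paper's iterated-rewriting route buys is robustness, since it is the version that survives in the weaker setting of Theorem \ref{lniToAlmCon}, where a single rewriting need not land on a normal form and no $\alpha$-decrease is available for a single application.
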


(A definition of almost convexity is provided as Definition \ref{almCon}.)

We strengthen this result with Theorem \ref{lniToAlmCon} from the present paper:

\begin{thm} \label{lniToAlmCon} Suppose $G = \langle A \rangle$ has a length non-increasing synchronously regular convergent prefix-rewriting system. Then $G$ is almost convex. \end{thm}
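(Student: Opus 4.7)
The plan is to combine Theorem~\ref{lniToAutost} with the observation that a length non-increasing rewriting system produces geodesic normal forms.

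First, I would show that the normal form set $\cali{N}$ of $R$ consists of geodesic words. For any $g \in G$ and any geodesic word $w$ representing $g$, the length non-increasing property of $R$ gives $\abs{\text{nf}_R(g)} = \abs{\text{nf}_R(w)} \leq \abs{w} = d_A(1_G, g)$, and the reverse inequality is trivial. Thus each $\text{nf}_R(g)$ is a geodesic word for $g$.

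Second, since a length non-increasing CP-RS is the weight non-increasing case with all weights equal to $1$, Theorem~\ref{lniToAutost} applies and provides an autostackable structure for $G$ whose normal form set is $\cali{N}$. Proposition~\ref{brcprs} then converts this into a bounded synchronously regular CP-RS $R'$ with normal forms $\cali{N}$; in particular, each rule of $R'$ has the form $(p\ell, p\rho)$ with $\abs{\ell} + \abs{\rho} \leq k$ for some uniform constant $k$.

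Third, I would deduce almost convexity directly from $R'$. Given $g, h \in S_n$ with $d_A(g, h) \leq 2$ and $h = ga$ for some $a \in A^{\leq 2}$, the word $ua$ (where $u = \text{nf}_{R'}(g) \in \cali{N}$) represents $h$ and admits a rewriting $ua = w_0 \to w_1 \to \cdots \to w_m = v = \text{nf}_{R'}(h)$ under $R'$. Each step modifies $w_i$ only inside a window of at most $k$ letters near the common-prefix vertex of the rule used, so the corresponding change to the Cayley graph path is a local detour of length at most $2k$. Concatenating these detours together with the initial and terminal geodesic segments $u$ and $v$ yields a walk from $g$ to $h$; the goal is to show that it has length bounded by a uniform constant $N$ and lies inside $B_n$.

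The main obstacle is the simultaneous control of total length and $B_n$-containment. For the length bound I would use a pigeonhole argument on the finitely many states of the synchronous-regularity automaton for $R'$: any sufficiently long rewriting revisits a state configuration, and the intervening block can be pumped away, compressing the sequence to length uniformly bounded in terms of the state count and $k$. For $B_n$-containment I would exploit the geodesicity of $\cali{N}$: whenever a local detour would push the walk above $S_n$, its endpoints lie in $B_n$ and so can be joined by the (reversed) normal-form path back toward $1_G$ followed by the normal-form path out again, which stays inside $B_n$. A careful accounting, using that only boundedly many detours survive the pigeonhole compression, shows that this rerouting preserves the uniform length bound and produces an almost convexity constant depending only on the automaton for $R'$ and on $k$.
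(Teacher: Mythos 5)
Your opening observation (length non-increasing normal forms are geodesic) is correct and is indeed needed, and starting from the bounded system produced by Theorem \ref{lniToAutost} is the right move. However, routing through Proposition \ref{brcprs} to ``a bounded synchronously regular CP-RS $R'$'' throws away exactly the structure the argument needs: the specific system $S$ built in the proof of Theorem \ref{lniToAutost} has the property, established in its termination subsection, that every edge on the path traced by the right-hand side of a rule is no farther from $1_G$ than the edge being rewritten. An arbitrary bounded CP-RS with geodesic normal forms need not satisfy this (a rule $(p\ell,p\rho)$ only guarantees $\overline{p\ell}=\overline{p\rho}$; the path labeled $\rho$ may leave the ball), so $B(n)$-containment cannot be recovered afterwards the way you propose.

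Both of your patches for the ``main obstacle'' have genuine gaps. For the length bound, ``pumping away'' a block of a rewriting sequence is not a meaningful operation: the sequence of words $w_0\to w_1\to\cdots\to w_m$ is not a path in the automaton accepting $R'$, deleting a portion of it does not in general yield a valid rewriting sequence, and the number of steps needed to reduce $\text{nf}(g)a$ to $\text{nf}(h)$ is not uniformly bounded for a general bounded CP-RS, so the traced detour can grow with $n$. The paper avoids this by never rewriting anything except minimally reducible words: after each rule application it replaces the prefix preceding the last letter by its normal form (an equality of group elements, traversed along geodesic normal-form segments), so every stage has the form $\text{nf}(ha_i^{-1})a_i$; uniqueness of normal forms together with termination forces the letters $a_0,a_1,\dots$ to be pairwise distinct, hence there are at most $\abs{A}$ stages, each contributing a detour of length at most $4k+4$. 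For $B(n)$-containment, your reroute ``back toward $1_G$ along the normal-form path and out again'' has length on the order of $2n$, which destroys any uniform bound and so cannot establish almost convexity. In the paper this issue never arises, because the inequality $d_{G,A}(1_G,\iota(e'))\le d_{G,A}(1_G,\iota(e))$ along right-hand sides of $S$ (plus geodesicity of the normal forms) keeps every detour inside $B(n)$ automatically; finally, the case $d_{G,A}(g,h)=2$ with the intermediate vertex $ga$ outside $B(n)$ needs the separate device of passing through $g'=ga\,c^{-1}$ with $c=\text{last}(\text{nf}_S(ga))$, which your sketch does not address.
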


We also give sufficient conditions for our proof of Theorem \ref{lniToAutost} to build a geodesically autostackable structure from a length non-increasing synchronously regular CP-RS with the following theorem:

\begin{thm} \label{extraToGeo} Suppose that $G = \langle A \rangle$ admits a length non-increasing \propL synchronously regular CP-RS $R$. Then the autostackable structure for $G$ constructed in the proof of Theorem \ref{lniToAutost} is a geodesically autostackable structure. \end{thm}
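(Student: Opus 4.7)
The plan is to verify the two conditions in Definition~\ref{geoAutost} for the autostackable structure produced by the proof of Theorem~\ref{lniToAutost}. That construction passes through a bounded regular CP-RS whose rules have the form $(psa,pr)$, where $ps=\text{nf}_R(v)$ is irreducible and $psa\to_R pr$ is the one-step rewrite applied to $\text{nf}_R(v)\,a$; the associated flow function sends a non-tree edge $e$ from $v$ to $va$ to $\Phi(e)=s^{-1}\cdot r$, which backtracks from $v$ along $s^{-1}$ to the vertex $p$ and then traces $r$ forward to $pr=va$. That the normal forms label geodesics is immediate from length non-increasingness: any geodesic word $w$ for a group element $g$ reduces to $\text{nf}_R(g)$ without increasing length, so $|\text{nf}_R(g)|\le d_{G,A}(1_G,g)$, while the reverse inequality is automatic.

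For the $\alpha$-inequality I would first observe that the backtracking portion of $\Phi(e)$ lies entirely in the spanning tree, because every prefix of the irreducible word $ps$ is itself irreducible and hence equals its own normal form; consecutive vertices along $s^{-1}$ are therefore joined by tree edges. So only the forward edges $e_1,\dots,e_m$ along $r=r_1\cdots r_m$ can fail the $\alpha$-inequality, where $e_j$ runs from $g_{j-1}:=pr_1\cdots r_{j-1}$ to $g_j:=pr_1\cdots r_j$.

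The key use of the \propL hypothesis disposes of the final edge $e_m$: because $ps=\text{nf}_R(v)$ is irreducible, the rule $(psa,pr)$ meets the hypothesis in the definition of \propL, giving $\text{last}(pr)=\text{last}(\text{nf}_R(pr))=\text{last}(\text{nf}_R(va))$. Writing $\text{nf}_R(va)=q\cdot r_m$, the word $q$ is irreducible, represents $g_{m-1}$, and so equals $\text{nf}_R(g_{m-1})$. This forces the edge $e_m$ from $g_{m-1}$ to $va$ labelled $r_m$ to be a tree edge, so $\Phi(e_m)=e_m$ and $e_m$ requires no further check; as a byproduct, $d_{G,A}(1_G,g_{m-1})=d_{G,A}(1_G,va)-1$.

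For the remaining forward edges $e_j$ with $j\le m-1$, the triangle inequality and word-length bound give $\alpha(e_j)\le |p|+j-\tfrac12$, length non-increasingness gives $m\le |s|+1$, and the geodesic normal forms give $\alpha(e)\ge d_{G,A}(1_G,v)-\tfrac12$. These elementary bounds deliver $\alpha(e_j)<\alpha(e)$ directly when $j\le m-2$, and also when $j=m-1$ if the rule is strictly length-decreasing ($m\le |s|$). The hard part is the length-preserving boundary case $j=m-1$ with $m=|s|+1$, where the generic bound degenerates to $\alpha(e_{m-1})\le\alpha(e)$ with equality possible; here I would invoke the byproduct from the previous step, combining $d_{G,A}(1_G,g_{m-1})=d_{G,A}(1_G,va)-1$ with the word-length bound $d_{G,A}(1_G,g_{m-2})\le |p|+m-2=d_{G,A}(1_G,v)-1$ to obtain the sharper estimate $\alpha(e_{m-1})\le\alpha(e)-1$. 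End-normality is essential precisely to eliminate this degeneracy.
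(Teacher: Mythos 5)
Your overall strategy (geodesic normal forms from length non-increasingness, the backtracking part of $\Phi(e)$ consisting of tree edges, and end-normality forcing the final edge of $\Phi(e)$ to be a tree edge so that it is exempt from the $\alpha$-inequality) matches the paper's proof in spirit. But there is a genuine gap in how you treat the intermediate forward edges, and it stems from a mischaracterization of the structure you are asked to analyze. The bounded CP-RS $S$ built in the proof of Theorem~\ref{lniToAutost} does \emph{not} consist of rules $(psa,pr)$ given by a single $R$-rewrite applied to $\text{nf}_R(v)a$; that system is $R'$, which is in general unbounded and hence cannot feed the flow-function construction at all. Instead $S$ has two kinds of rules: short rules $(u,\text{nf}_{R'}(u))$ rewriting directly to the normal form, and long rules $(u_1u_2l,\; u_1\,\text{nf}_{R'}(u_2lv_2^{-1})\,v_2)$ obtained by rerouting an $R'$-rule $(u_1u_2l,v_1v_2)$ through a ``bridge'' word $\text{nf}_{R'}(u_2lv_2^{-1})$ of length at most $2k-1$. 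These long rules are \emph{not} length non-increasing: the right-hand side can exceed the left-hand side in length by up to $2k-1$. Consequently your key inequality $m\le |s|+1$ fails for them, and the elementary estimate $\alpha(e_j)\le |p|+j-\tfrac12 < \alpha(e)$ does not follow for the forward edges coming from an $S_2$ rule. Since the statement of Theorem~\ref{extraToGeo} is specifically about the structure constructed in Theorem~\ref{lniToAutost}, this is exactly the case that must be handled, and your argument as written only covers what would be the $S_1$ (short-rule) case.

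The paper closes this gap differently: for the long rules it invokes the case analysis from the termination subsection of Theorem~\ref{lniToAutost}, where the specific shape of the $S_2$ rules is used --- edges along $u_1$ are tree edges, edges along the bridge satisfy $wt(w(e'))\le wt(u_1)+(2k-1)\max wt < wt(u_1)+wt(u_2l)=wt(w(e))$ because the bridge is short compared with the threshold $N$, and edges along $v_2$ (except the last) are controlled by $wt(v_1v_2)\le wt(u_1u_2l)$, i.e.\ by length non-increasingness of $R'$ rather than of $S$. Translated into distances, this gives $d_{G,A}(1_G,\iota(e'))\le d_{G,A}(1_G,\iota(e))$ with the strictness needed for $\alpha(e')<\alpha(e)$ on all but the final edge, which end-normality then makes a tree edge, just as you argue. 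To repair your proposal you would need to replace the blanket appeal to ``length non-increasingness of the bounded system'' with these structural bounds on the $S_2$ rules (bridge length at most $2k-1<N\le |u_2l|$, and $|v_1v_2|\le |u_1u_2l|$); your treatment of the degenerate boundary case via end-normality would then survive essentially unchanged.
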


Our main theorem is that all pairs $(G, A)$ with FFTP are geodesically autostackable, which we prove by applying Theorem \ref{extraToGeo}:

\begin{thm} \label{mainThm} [Main Theorem] Suppose the pair $(G, A)$ has the falsification by fellow traveler property. Then
\begin{enumerate}[(a)]
\item $G$ admits a length non-increasing \propL synchronously regular convergent prefix-rewriting system $R$; and
\item $G$ is geodesically autostackable.
\end{enumerate} \end{thm}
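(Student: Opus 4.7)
Part (b) is immediate from (a) together with Theorem \ref{extraToGeo}, so the main work lies in constructing the CP-RS claimed in (a). My plan is as follows. First, invoke the Neumann-Shapiro result that FFTP makes the language $\mathcal{L}$ of all $A$-geodesics of $G$ regular. Then select a regular sublanguage $\mathcal{N} \subseteq \mathcal{L}$ containing exactly one geodesic per element of $G$, using the $k$-fellow traveler bound to show that some natural choice of representative (e.g.\ a lex-least geodesic) can be recognized by a finite automaton with bounded lookahead. Finally, define
\[ R = \{\, (u, v) : u = wa,\ w \in \mathcal{N},\ a \in A,\ u \notin \mathcal{N},\ v \in \mathcal{N},\ v =_G u \,\}. \]

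With this setup several properties come essentially for free. The irreducible words of $R$ are exactly $\mathcal{N}$, giving uniqueness of normal forms; termination holds since each rewrite lands in $\mathcal{N}$ and subsequent rewrites can only occur at proper extensions; length non-increase follows from the fact that $v$ is a geodesic for the element $u$ represents, so $\abs{v} \leq \abs{wa} = \abs{u}$; and end-normality is automatic because every right-hand side lies in $\mathcal{N}$ and is hence its own $R$-normal form, so $\text{last}(v) = \text{last}(\text{nf}_R(v))$ trivially.

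The main obstacle is verifying synchronous regularity of $R$. Observe that $\abs{u} - \abs{v} \in \{0, 1, 2\}$ for every rule, so padded pairs have uniformly bounded length difference. FFTP supplies the geometric control: whenever $u$ is non-geodesic, its FFTP witness $v'$ is shorter than $u$ and synchronously $k$-fellow travels with $u$. The subtler task is to upgrade this to a fellow traveler bound between $u$ and the specific normal form $v \in \mathcal{N}$ rather than merely some witness $v'$. I would handle this by iterating FFTP on $v'$ (which is either already in $\mathcal{N}$ or itself admits a shorter representative still close to $u$), bounding the total fellow-traveler displacement by a constant depending on $k$ and the structure of $\mathcal{N}$. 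With such a uniform bound $K$ in hand, a standard synchronous product construction over the automaton recognizing $\mathcal{N}$ plus a $K$-bounded ``difference track'' comparing the prefixes of $u$ and $v$ yields a synchronous automaton for $R$. Applying Theorem \ref{extraToGeo} to this CP-RS then produces the geodesically autostackable structure required for (b).
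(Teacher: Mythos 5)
Your skeleton agrees with the paper's (build a length non-increasing \propL synchronously regular CP-RS, then get (b) by quoting Theorem \ref{extraToGeo}), and you correctly identify synchronous regularity as the crux; but your construction has a genuine gap exactly there. Because your rules send each minimally reducible word $u = wa$ directly to its normal form $v = \text{nf}(u)$, synchronous regularity of $R$ requires a uniform fellow-traveller constant between $u$ and that \emph{specific} geodesic representative. FFTP does not supply this: its witnesses exist only for non-geodesic words, so ``iterating FFTP'' stops after at most two length-reducing steps at \emph{some} geodesic, and FFTP gives no control on how that geodesic relates to the lex-least one. Distinct geodesics of the same element in an FFTP pair need not fellow travel at all: in $\bb{Z}^2$ with generating set $\set{a^{\pm 1}, b^{\pm 1}, (ab)^{\pm 1}}$ (FFTP, since the group is abelian), the geodesics $a^n b^{-n}$ and $b^{-n}a^n$ of the same element are at distance $n$ at their midpoints. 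Worse, the uniform bound $K$ you posit between $wa$ (with $w \in \cali{N}$) and $\text{nf}(wa)$ is precisely the fellow traveller property for the normal form language, i.e.\ it would make $(G,A)$ automatic with geodesic normal forms --- exactly what is \emph{not} known for FFTP groups and the reason the paper settles for autostackability. The same objection undermines your earlier step that a lex-least geodesic cross-section ``can be recognized with bounded lookahead'': deciding lex-minimality means comparing a geodesic against other geodesics of the same element inside an automaton, which FFTP does not license. (Your termination argument is also too quick, since a rewrite can create a new reducible prefix ending inside the inserted normal form, but that is repairable; the regularity claim is not, as stated.)

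The paper's route avoids the problem by \emph{not} requiring right-hand sides to be normal forms. It takes $L = \set{(u,v) \mid u =_G v,\ v <_{srev} u,\ u \text{ and } v\ 4k\text{-fellow travel}}$, whose synchronous regularity is immediate from the bounded word difference, and proves the key Lemma \ref{doubleFellowTravel}: every word that is not a short reverse lexicographic normal form $2k$-fellow travels some $<_{srev}$-smaller equal word; for a geodesic $u$ this comes from applying FFTP to the non-geodesic word $u_1 l^{-1}$, where $l$ is the letter the normal form inserts before the longest common suffix with $u$. Convergence then follows from well-foundedness of $<_{srev}$ --- a word may need arbitrarily many bounded-perturbation rewrites to reach its normal form, which is exactly why no uniform bound between $u$ and $\text{nf}(u)$ is needed (or available). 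Regularity of $\cali{N}$ falls out afterwards as the set of irreducible words, rather than being an input, and the \propL condition is arranged by intersecting and partitioning $L$ so that the selected right-hand side ends in $\text{last}(\text{nf}_R(u))$, which the Lemma's construction guarantees is possible. To salvage your write-up you would need either a proof of your uniform constant $K$ (which would in fact prove FFTP implies automaticity) or a reformulation along these lines in which right-hand sides are merely $<_{srev}$-smaller fellow-travelling words.
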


There are several related implications between the theorems proved in this paper and other theorems mentioned in this section and in section \ref{notation}. In order to better visualize these implications, we have included a flowchart with the results of this paper and similar results from previous papers as Figure \ref{fig: flowchart}.

The current paper is organized as follows: In Section \ref{notation}, we provide notation, definitions, and theorems which are used in the remainder of the paper. In Section \ref{ThmBC}, we give a constructive proof of Theorem \ref{lniToAutost}, prove Theorem \ref{extraToGeo} as a consequence of adding additional hypotheses to Theorem \ref{lniToAutost}, and prove Theorem \ref{lniToAlmCon}. In Section \ref{ThmA}, we prove Theorem \ref{mainThm}, the main theorem of this paper. In Section \ref{noAConverse}, we show that there are examples of pairs $(G, A)$ with length non-increasing synchronously regular CP-RS which do not have FFTP with the associated generating set. As a consequence of this, the converse of part (a) of the main theorem fails.

\begin{figure}
\begin{tikzpicture}[scale = 2]
\node (FFTP) [flowNode] at (0, 0) {FFTP};
\node (niceCPRS) [flowNode] at (4, 0) {Length-non-increasing \propL synchronously regular CP-RS};
\node (geoAS) [flowNode] at (2, -1.5) {Geodesically Autostackable};
\node (AC) [flowNode] at (0, -3) {Almost Convex};
\node (LNICPRS) [flowNode] at (4, -3) {Length-non-increasing CP-RS};
\node (AS) [flowNode] at (1, -4.5) {Autostackable};
\node (bcprs) [flowNode] at (4, -4.5) {Bounded synchronously regular CP-RS};
\draw [double distance = 5pt, -{Stealth[length = 6mm, width = 4mm]}] (FFTP) -- node[above] {Thm \ref{mainThm} part (a)} (niceCPRS);
\draw [double distance = 5pt, -{Stealth[length = 6mm, width = 4mm]}] (FFTP) -- node[above right] {Thm \ref{mainThm} part (b)} (1.5, -1.125) -- (geoAS);
\draw [double distance = 5pt, -{Stealth[length = 6mm, width = 4mm]}] (FFTP) -- node[left] {\cite{Elder2005}} (AC);
\draw [double distance = 5pt, -{Stealth[length = 6mm, width = 4mm]}] (niceCPRS) -- node[above left] {Thm \ref{extraToGeo}} (geoAS);
\draw [double distance = 5pt, -{Stealth[length = 6mm, width = 4mm]}] (geoAS) -- node[above left] {\cite{BH2015}} (AC);
\draw [double distance = 5pt, -{Stealth[length = 6mm, width = 4mm]}] (niceCPRS) -- node[right] {Immediate} (LNICPRS);
\draw [double distance = 5pt, -{Stealth[length = 6mm, width = 4mm]}] (LNICPRS) -- node[above] {Thm \ref{lniToAlmCon}} (AC);
\draw [double distance = 5pt, -{Stealth[length = 6mm, width = 4mm]}] (geoAS) -- (1.5, -3) -- node[below right] {Immediate} (AS);
\draw [double distance = 5pt, -{Stealth[length = 6mm, width = 4mm]}] (LNICPRS) -- node[right] {Thm \ref{lniToAutost}} (bcprs);
\draw [double distance = 5pt, {Stealth[length = 6mm, width = 4mm]}-{Stealth[length = 6mm, width = 4mm]}] (AS) -- node[below] {\cite{BHH2014}} (bcprs);
\end{tikzpicture}
\caption{A flowchart of related results from this and other papers}
\label{fig: flowchart}
\end{figure}
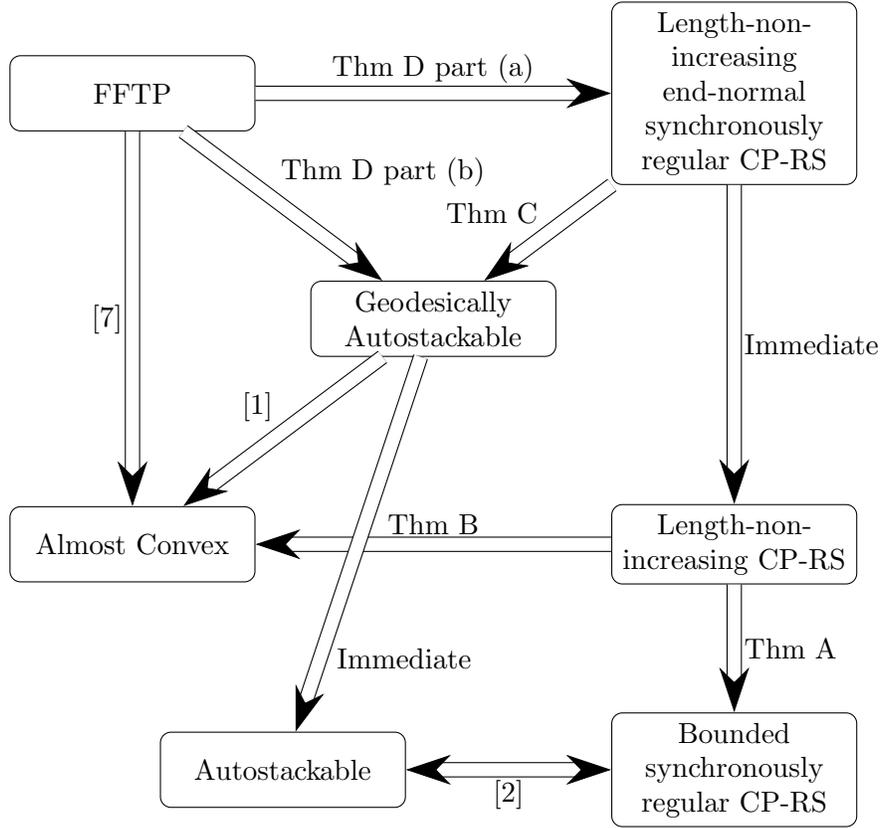

\section{Notation, Definitions, and Background} \label{notation}

Throughout this paper, let $G$ be a group with finite generating set $A$ which is closed under inversion.

Let $\Gamma_{G, A}$ be the Cayley graph of $G$ with respect to generating set $A$. Then for a directed edge $e$ of $\Gamma_{G, A}$, we use $\iota(e)$ to denote the initial vertex of $e$, $\tau(e)$ to denote the terminal vertex of $e$, and $\text{label}(e)$ to denote the label from $A$ associated to $e$. We also denote the vertex corresponding to the identity element of $G$ by $1_G$, and $e_{y, a}$ to denote the directed edge with initial vertex $y$ and label $a$. For a path $p$ in $\Gamma_{G, A}$, we use $\text{label}(p)$ to denote the word in $A^\ast$ obtained by concatenating the labels of each edge in $p$.

For words $u, v \in A^\ast$, we use $u =_{A^\ast} v$ to mean that $u$ and $v$ are identical words, and $u =_G v$ to mean that $u$ and $v$ represent the same group element in $G$. Given a CP-RS $R$, we say that $u =_R v$ if there exist words $u_1, \dots, u_n, v_1, \dots v_m, w \in A^\ast$ with $u \to_R u_1 \to_R \cdots \to_R u_n \to_R w$ and $v \to_R v_1 \to_R \cdots \to_R v_m \to_R w$. Note that if $R$ is a CP-RS for $G$, then $u =_R v$ if and only if $u =_G v$. We use $\abs{u}$ to denote the length of $u$, $u(n)$ to denote the prefix of $u$ with exactly $n$ letters if $n \leq \abs{u}$ and all of $u$ if $n \geq \abs{u}$, $\text{first}(u)$ to denote the first letter of $u$, and $\text{last}(u)$ to denote the last letter of $u$. For a group element $g$ or a word $u$ representing a group element $g$, we use $\abs{g}_{G, A}$ or $\abs{u}_{G, A}$ to denote the minimum length of any word $w \in A^\ast$ with $w =_G g$ or $w =_G u$. When we have a set of normal forms $\cali{N}$ (i.e. a set of unique representatives of group elements), we will use $\text{nf}(u)$ to denote the normal form of the group element represented by $u$, and where we have multiple normal form sets we will use subscripts based on the source of the normal forms (e.g. $\text{nf}_R(u)$) to specify which normal forms we mean. We will call any word which is not a normal form \emph{reducible}, and any reducible word of the form $ul$ for some normal form $u \in A^\ast$ and letter $l \in A$ \emph{minimally reducible}. We use $\overline{u}$ to denote the group element represented by $u$, use $\lambda$ to denote the empty word, and use $d_{G, A}(\overline{u}, \overline{v})$ to denote the path metric on $\Gamma_{G, A}$.

\begin{defn}
For an alphabet $A$ with a strict total ordering $\prec$, a word $u$ is said to be \emph{short reverse lexicographically smaller} than a word $v$ ($u <_{srev} v$) if $u \neq v$ and either
\begin{itemize}
\item $\abs{u} < \abs{v}$; or
\item $\abs{u} = \abs{v}$ and for each $s \in A^\ast$ such that $u =_{A^\ast} u's$ and $v =_{A^\ast} v's$ for some $v', w' \in A^\ast$, we have $\text{last}(u') \preceq \text{last}(v')$.
\end{itemize}
\end{defn}

The short reverse lexicographic ordering given above is essentially the shortlex ordering reading right-to-left rather than left-to-right, and we use it in this paper rather than shortlex to make certain proofs and constructions simpler.

\subsection{Formal Language Theory}
In this subsection we provide definitions and background related to language theory. The reader can refer to \cite{HopUll1979} for a more detailed treatment. A \emph{language} over $A$ is a subset of $A^\ast$. The \emph{regular languages} consist of all finite languages, along with the closure under finitely many unions, intersections, complements, concatenations ($P \cdot S = \set{ps \mid p \in P, s \in S}$), and Kleene stars ($S^0 = \set{\lambda}$, $S^i = S^{i - 1} \cdot S$ for all natural numbers $i$, and $S^\ast = \bigcup_{i = 0}^\infty S^i$). While it is not immediate, the class of regular languages is also closed under quotients ($P / S = \set{w \in A^\ast \mid \text{there exists } s \in S \text{ such that }ws \in P}$) \cite[Theorem 3.6]{HopUll1979}. We use $A^{k}$ to denote the set of all words over $A$ of length exactly $k$, and $A^{\leq k}$ to denote the set of all words over $A$ with length at most $k$.

\begin{defn} \label{FSA} \cite[Page 17]{HopUll1979}
A \emph{finite state automaton (FSA)} consists of a finite set of states $Q$, a finite input alphabet $A$, a transition function $\delta: Q \times A \to Q$, an initial state $q_0$, and a set of accepting states $P \subseteq Q$. Given a finite state automaton $M$, $\widehat{\delta}: Q \times A^\ast \to Q$ is the function given by $\widehat{\delta}(q, \lambda) = q$ and $\widehat{\delta}(q, aw) = \widehat{\delta}(\delta(q, a), w)$. A word $u$ is \emph{accepted} by $M$ if $\widehat{\delta}(q_0, u) \in P$, and the \emph{language accepted by $M$} is the set of all words which are accepted by $M$. 
\end{defn}

We will at times view finite state automata as labeled directed graphs with vertex set $Q$. Under this view, the edges are given by the transition function, with $\delta(q, a) = q'$ giving a directed edge from $q$ to $q'$ labeled by $a$. A path in this graph from a state $q$ to a second state $q'$ is labeled by a word $w$ with $\widehat{\delta}(q, w) = q'$.

A language $L$ is regular if and only if $L$ is the language accepted by some finite state automaton \cite[Section 2.5]{HopUll1979}. We will use both the definition of regular languages and the equivalent idea of languages accepted by FSAs throughout this paper. We will also frequently use synchronously regular languages:

\begin{defn}\cite[Definition 1.4.4]{ECHLPT1992}
Given a language $L \subseteq A_1^\ast \times \cdots \times A_n^\ast$ and padding symbols $\$_i \not \in A_i$ for each $i$, we define a language $L^p$ over the padded alphabet $B = (A_1 \cup \set{\$_1}) \times \cdots \times (A_n \cup \set{\$_n})$ as follows: 
\begin{itemize}
\item For each $n$-tuple $(w_1, \dots, w_n) \in L$, let $m = \text{max}\set{\abs{w_i} \mid 1 \leq i \leq n}$. 
\item We pad each $w_i$ with $\$_i$'s at the end to make its length $m$. 
\item The resulting $n$-tuple of strings is the \emph{padded tuple}, denoted $(w_1, \dots, w_n)^p$.
\item $L^p = \set{(w_1, \dots, w_n)^p \in B \mid (w_1, \dots, w_n) \in L}$
\end{itemize}
$L^p$ is the \emph{padded extension of $L$}. A language $L$ over $(A_1, \dots, A_n)$ is \emph{synchronously regular} if $L^p$ is a regular language over $B$.
\end{defn}

When we consider products of languages, we will consider them as padded languages unless otherwise specified. When we consider the concatenation of two languages over a product alphabet $L \cdot K$, we will concatenate each entry of the tuple before padding and subsequently move any pre-existing padding to the end; that is, $ L^p \cdot K^p = (L \cdot K)^p = \set{(u_1v_1, \dots, u_nv_n)^p \mid (u_1, \dots, u_n) \in L \text{ and } (v_1, \dots, v_n) \in K}$.

\begin{lemma}(Pumping Lemma, \cite[Lemma 3.1 and Exercise 3.2]{HopUll1979}) Let $L$ be a regular language. Then there exists a natural number $n$ such that for any word $z$ with $\abs{z} \geq n$ and any words $p, s$ with $pzs \in L$, we can decompose $pzs =_{A^\ast} puvws$ with $\abs{uv} \leq n$, $\abs{v} \geq 1$, and for all $i \geq 0$ the word $puv^iws$ is an element of $L$. In particular, given an FSA $M$ accepting $L$, we can take $n$ to be the number of states of $M$. \end{lemma}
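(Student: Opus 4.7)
The plan is to prove the Pumping Lemma via the standard pigeonhole argument on the computation of a finite state automaton accepting $L$. Let $M = (Q, A, \delta, q_0, P)$ be an FSA accepting $L$ with $n = \abs{Q}$ states, and suppose $pzs \in L$ with $\abs{z} \geq n$. I would first fix notation for the trajectory of $M$: let $q_p = \widehat{\delta}(q_0, p)$ be the state reached after reading the prefix $p$, and for $0 \leq i \leq \abs{z}$ let $r_i = \widehat{\delta}(q_p, z(i))$ be the state reached after reading the prefix $p$ followed by the first $i$ letters of $z$.

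The core step is the pigeonhole argument. Since $\abs{z} \geq n$, the sequence of states $r_0, r_1, \dots, r_n$ has $n + 1$ entries drawn from a set of size $n$, so there exist indices $0 \leq i < j \leq n$ with $r_i = r_j$. Decompose $z =_{A^\ast} uvw$ by letting $u$ be the length-$i$ prefix of $z$, $v$ the substring of $z$ strictly between positions $i$ and $j$ (so $\abs{v} = j - i \geq 1$), and $w$ the remaining suffix. Then $\abs{uv} = j \leq n$ and $\abs{v} \geq 1$, matching the required bounds.

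Finally, I would verify the pumping conclusion. Because $r_i = r_j$, reading the word $v$ starting from state $r_i$ returns $M$ to state $r_i$; by induction on $k$, reading $v^k$ from $r_i$ also ends in state $r_i$. Therefore $\widehat{\delta}(q_0, puv^k ws) = \widehat{\delta}(r_j, ws) = \widehat{\delta}(r_{\abs{z}}, s)$, which equals $\widehat{\delta}(q_0, pzs)$ and hence lies in $P$ since $pzs \in L$. Thus $puv^k ws \in L$ for every $k \geq 0$.

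I do not anticipate a serious obstacle here: this is the textbook pumping lemma, the only minor adjustment being the presence of the fixed prefix $p$ and suffix $s$. These contribute nothing to the pigeonhole, since we restrict the argument to the $n+1$ states visited while reading $z$, and they are handled uniformly in the final verification because $\widehat{\delta}$ is deterministic and only depends on the state reached after the pumped portion. The statement that $n$ may be taken to be the number of states of a given accepting FSA follows directly from the construction.
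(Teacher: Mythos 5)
Your proof is correct and is exactly the standard pigeonhole argument that the paper relies on; the paper itself gives no proof, citing Hopcroft--Ullman, and your handling of the fixed prefix $p$ and suffix $s$ (restricting the pigeonhole to the $n+1$ states visited while reading $z$ and using determinism of $\widehat{\delta}$ at the end) is precisely the intended adaptation of that textbook argument.
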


The smallest such $n$ is the \emph{pumping number} of $L$.

\subsection{Almost Convexity}
\begin{defn} \label{almCon} \cite{Cannon1987} A group $G$ is \emph{almost convex} with respect to a finite generating set $A$ if there is a constant $k$ such that for all $n \in \bb{N}$ and $g, h$ in the sphere $S(n)$ (in $\Gamma_{G, A}$ centered at $1_G$) with $d_{G, A}(g, h) \leq 2$, there exists a path inside the ball $B(n)$ (in $\Gamma_{G, A}$ centered at $1_G$) of length at most $k$ from $g$ to $h$. \end{defn}

Almost convexity was introduced by Cannon \cite{Cannon1987}. Thiel showed that almost convexity is dependent on generating set \cite{Thiel1992}. Elder showed that given a pair $(G, A)$ with FFTP, $G$ is almost convex with respect to $A$ \cite{Elder2005}.

\section{Proofs of Theorems A, B, and C} \label{ThmBC}
In this section, we prove that every weight non-increasing regular CP-RS gives a bounded regular CP-RS, which yields an autostackable structure with Proposition \ref{brcprs}.

\setcounter{thm}{0}
\begin{thm} Suppose $G = \langle A \rangle$ has a weight non-increasing synchronously regular CP-RS $R$. Then $G$ has an autostackable structure which has the same normal forms as $R$. \end{thm}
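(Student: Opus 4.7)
The plan is to construct, from the weight non-increasing synchronously regular CP-RS $R$, a bounded synchronously regular CP-RS $R'$ for $G$ having the same normal forms as $R$, and then invoke Proposition~\ref{brcprs} to promote $R'$ to an autostackable structure. Take $R' = \set{(ul, \text{nf}_R(ul)) \mid ul \text{ is minimally reducible}}$. A word is $R'$-reducible iff some prefix is $R$-reducible iff it is $R$-reducible, so $R'$ and $R$ share their irreducible set, and $R'$ is a CP-RS for $G$.

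For synchronous regularity of $R'$, I would proceed as follows. The set of $R$-normal forms is regular (the complement of $\pi_1(R)\cdot A^*$, with $\pi_1(R)$ regular as a projection of the synchronously regular relation $R$), so the set of minimally reducible words $ul$ is regular. For such a word with $u$ irreducible, the only $R$-rule that can apply has left-hand side exactly $ul$, giving a unique $(ul, r)\in R$ with $\text{nf}_R(ul) = \text{nf}_R(r)$. Weight non-increasing together with the finiteness of $A$ bounds $\abs{v}$ linearly in $\abs{ul}$, which combined with the FSA for $R^p$ and the regular condition ``output is irreducible'' lets me assemble an FSA recognizing $(R')^p$.

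The main technical step, and the principal obstacle, is showing that $R'$ is bounded: that there is a constant $k$ with $\abs{a}+\abs{b}\leq k$ for every $(ul, v)\in R'$, where $p$ is the longest common prefix of $ul$ and $v$ and $ul = pa$, $v = pb$. If no such $k$ existed, one could select a sequence $(u_nl_n, v_n)\in R'$ with $\abs{a_n}+\abs{b_n}\to\infty$. I would then apply the pumping lemma to the regular padded language $(R')^p$ to isolate a pumpable loop sitting in the non-common-prefix portion of a sufficiently long pair; pumping this loop produces an infinite family of pairs in $R'$ with the same common-prefix structure but systematically diverging tails. Uniqueness of the normal form for each minimally reducible word, combined with the strict positivity of letter weights, then rules out such a family and yields the uniform bound $k$. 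Once $R'$ is bounded, Proposition~\ref{brcprs} immediately supplies the autostackable structure with the same normal forms, completing the proof. Throughout, the crux is the boundedness step: weight non-increasing alone gives only a linear length bound, and converting this to a uniform bound on non-common tails is precisely what requires the synchronously regular pumping lemma together with normal-form uniqueness.
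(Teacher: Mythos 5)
Your plan founders at the step you yourself identify as the crux: the system $R' = \set{(ul, \text{nf}_R(ul)) \mid ul \text{ minimally reducible}}$ is simply \emph{not} bounded in general, so no pumping argument can prove it is. Boundedness requires the left and right sides to agree except on a suffix of uniformly bounded length, and rewriting a minimally reducible word all the way to its normal form typically disturbs the word arbitrarily far from its end. The paper's own Section 5 example already refutes your claim: for $G = \bb{Z}^2 \rtimes \bb{Z}_2$ with the length non-increasing synchronously regular CP-RS listed there, the pairs $(a^i t a^j t a,\ a^{i+1} t a^j t)$ are exactly of your form $(ul, \text{nf}_R(ul))$, their longest common prefix is $a^i$, and the remaining tails have total length about $2j+3$, which is unbounded in $j$. (The same happens in $\bb{Z}^2$ with shortlex normal forms $a^i b^j$: the pair $(a^i b^j a, a^{i+1} b^j)$ has tails $b^j a$ and $ab^j$.) Your proposed contradiction also evaporates on inspection: pumping a long padded pair such as $(a^i b^j a, a^{i+1} b^j)^p$ pumps a block where the two coordinates agree up to a one-letter offset, and the pumped words are again legitimate pairs of the same shape with \emph{different} left-hand sides, so uniqueness of normal forms is never violated. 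A secondary gap is the regularity claim: from minimal reducibility of $ul$ you only get that any applicable $R$-rule has left-hand side exactly $ul$, but its right-hand side $r$ need not be irreducible, and the condition $v =_{A^\ast} \text{nf}_R(r)$ is not something an FSA built from $R^p$ and the regular set of irreducibles can verify; the relation $w \mapsto \text{nf}_R(w)$ is a transitive closure of rewriting and is not synchronously regular for free.

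This is exactly the difficulty the paper's proof is built to circumvent, and it is why the construction there is more elaborate than ``rewrite to normal form.'' The paper first restricts to minimally reducible left-hand sides while \emph{keeping the original right-hand sides}, so regularity is just $R' = R \cap (\cali{N}A \times A^\ast)$, an intersection of synchronously regular languages. It then replaces $R'$ by $S = S_1 \cup S_2$, where $S_1$ handles left-hand sides of small weight by full normalization (finitely many rules), and $S_2$ handles long rules $(u_1u_2l, v_1v_2) \in R'$ by leaving the long prefix $u_1$ untouched and rerouting only a bounded suffix through a short ``bridge'' $\text{nf}_{R'}(u_2lv_2^{-1})$, whose length is bounded via the state count of an FSA for $(R')^p$. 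The price of this surgery is that termination of $S$ is no longer inherited from $R$ and must be established with a bespoke well-founded order on edges of $\Gamma_{G,A}$; only then does Proposition~\ref{brcprs} apply. Those two ingredients --- modifying only a bounded suffix rather than normalizing, and the accompanying termination argument --- are the missing ideas, and without them your argument does not go through.
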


\begin{proof}
Let $R$ be a length non-increasing synchronously regular CP-RS for $G = \langle A \rangle$ with normal form set $\cali{N}$. Let $$R' = \set{(u, v) \mid (u, v) \in R \text{ and } u \text{ is minimally reducible}}$$ Then $R'$ is terminating, as an infinite sequence of rewritings in $R'$ would necessarily be an infinite sequence of rewritings in $R$, and $R'$ is weight non-increasing. Further, $R' = R \cap (\cali{N}A \times A^\ast)$, an intersection of synchronously regular languages, hence $R'$ is regular. We have $R' \subseteq R$, so for all $(u, v) \in R'$, we have $u =_G v$, hence $G$ is a quotient of $Mon \langle A \mid R' \rangle$. For any word $u$ which can be reduced with $R$, we see that $u =_{A^\ast} u_1u_2$ for some $u_1, u_2 \in A^\ast$ with $u_1 \in \cali{N}$ and $u_1\text{first}(u_2) \not \in \cali{N}$. But $u_1\text{first}(u_2)$ can be rewritten over $R$, hence $u_1\text{first}(u_2)$ is the left side of a pair in $R'$ and is reducible in $R'$. Therefore, the set of irreducible words over $R'$ is the same as the set of irreducible words over $R$, so $R'$ has exactly one irreducible word over $A$ for each element of $G$; this, along with the fact that $G$ is a quotient of $Mon \langle A \mid R' \rangle$, gives that $G = Mon \langle A \mid R' \rangle$, and that the normal forms of $G$ over $R'$ are the same as the normal forms over $R$. Thus $R'$ is a weight non-increasing synchronously regular CP-RS for $G$ with normal form set $\cali{N}$.

Let $M$ be an FSA accepting the padded extension of $R'$, and let $k$ be the number of states in $M$. Let $N = 2k * \max\set{wt(a) \mid a \in A}$.  Let 
$$S_1 = \set{(u, \text{nf}_{R'}(u)) \mid u \text{ is minimally reducible and } wt(u) < N}$$
 and let 
\begin{align*}
S_2 = \set{(u_1u_2l, u_1\text{nf}_{R'}(u_2lv_2^{-1})v_2) \mid &(u_1u_2l, v_1v_2) \in R', wt(u_2) < N,\\ &wt(u_2l) \geq N, \text{ and } v_1 = v(\abs{u_1})}\\ 
\end{align*}
(A visual representation of the rules in $S_2$ in the Cayley graph $\Gamma_{G, A}$ is given in Figure \ref{fig: S2}.) Then let $S = S_1 \cup S_2$. We claim that $S$ is a bounded regular convergent prefix-rewriting system for $G$ with normal forms $\cali{N}$. For this, we must prove five things: that $G = Mon \langle A \mid \set{u = v \mid (u, v) \in S} \rangle$; that $S$ is bounded; that $S$ is synchronously regular; that $S$ is terminating; and that $\cali{N}$ is the set of irreducible words over $S$. 

\begin{figure}
\begin{tikzpicture}[scale = 2, every node/.style={circle, draw, fill= black, scale = 0.3}]
\node(1) at (0, 0)[label = left:\fontsize{40}{40}{$1_G$}]{};
\node(u1u2) at (5, 0.5){};
\node(v1v2) at (5, 0.25){};
\node(u1) at (4, 0.75){};
\node(v1) at (4, -0.25){};
\draw [rounded corners, -{Latex[length = 3mm, width = 2mm]}] (1) -- (2, -0.25) -- (v1) node[coordinate, behind path, midway, label = below:\fontsize{40}{40}{$v_1$}] {};
\draw [rounded corners, -{Latex[length = 3mm, width = 2mm]}] (1) -- (1, 0.5) -- (2, 0.5) -- (u1) node[coordinate, behind path, midway, label = above:\fontsize{40}{40}{$u_1$}] {};
\draw [rounded corners, -{Latex[length = 3mm, width = 2mm]}] (u1) -- (4.5, 0.6) -- (u1u2) node[coordinate, behind path, midway, label = above:\fontsize{40}{40}{$u_2$}] {};
\draw [decorate, decoration = random steps, -{Latex[length = 3mm, width = 2mm]}] (u1) -- (v1) node[coordinate, behind path, midway, label = left:\fontsize{40}{40}{$\text{nf}_{R'}(u_2lv_2^{-1})$}] {};
\draw [rounded corners, -{Latex[length = 3mm, width = 2mm]}] (v1) -- (4.5, -0.25) -- (v1v2) node[coordinate, behind path, midway, label = below:\fontsize{40}{40}{$v_2$}] {};
\draw [-{Latex[length = 3mm, width = 2mm]}] (u1u2) -- (v1v2) node[coordinate, behind path, midway, label = right:\fontsize{40}{40}{$l$}] {};
\end{tikzpicture}
\caption{For pairs in $S_2$, the left word follows the top path from $1_G$ to $u_1u_2l$, while the right word reroutes via the $\text{nf}_{R'}(u_2lv_2^{-1})$ bridge.}
\label{fig: S2}
\end{figure}
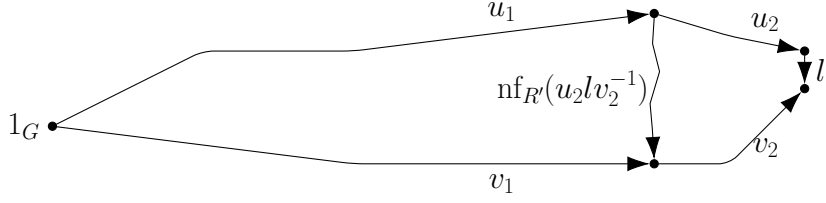

\subsection*{Prefix-rewriting system for $G$:}
We first prove that $$G = Mon \langle A \mid \set{u = v \mid (u, v) \in S} \rangle.$$ We prove this by showing that each relation given by $S$ is a relation in $G$, and that each relation in $R'$ (which we know is a prefix-rewriting system for $G$) is a result of relations in $S$.

We first consider relations $uw =_S vw$ arising from pairs $(u, v)$ in $S_1$: these are relations in $G$, since $uw =_G \text{nf}_{R'}(u)w =_{A^\ast} vw$. Relations arising from pairs $(u, v)$ in $S_2$ are also relations in $G$, as 
\begin{align*}
uw &=_{A^\ast} u_1u_2lw \\
&=_G u_1u_2lv_2^{-1}v_2w \\
&=_{R'} u_1\text{nf}_{R'}(u_2lv_2^{-1})v_2w \\
&=_{A^\ast} vw
\end{align*}
Since $R'$ is a prefix-rewriting system for $G$, we see that the third equality is also true in $G$, so $u =_G v$. Thus, equality of two words over $S$ implies equality of the same two words in $G$.

Now, let $(u, v)$ be a pair from $R'$. Our next goal is to show that $u =_S v$. We proceed by induction on $wt(u)$. Note that the set $\set{wt(u) \mid u \in A^\ast}$ is the set of non-negative integer combinations of finitely many positive values, hence is well-founded, so induction is a viable proof technique here.

For our base case, suppose that $wt(u) < N$. Then $wt(v) \leq wt(u)$ because $R'$ is weight non-increasing, so $S_1$ contains the pair $(u, \text{nf}_{R'}(u))$, and either $v =_{A^\ast} \text{nf}_{R'}(v)$ or $S_1$ contains the pair $(v, \text{nf}_{R'}(v))$. Since $u =_G v$, we have that $\text{nf}_{R'}(u) =_{A^\ast} \text{nf}_{R'}(v)$, and we have both $u =_S \text{nf}_{R'}(u)$ and $v =_S \text{nf}_{R'}(u)$, so $u =_S v$.

Now suppose that $wt(u) = n \geq N$, and that whenever $u' =_{R'} v'$ with $wt(u') < n$ and $wt(v') < n$, we have that $u' =_S v'$. Then let $l = \text{last}(u)$, let $u_2l$ be the shortest suffix of $u$ with weight at least $N$, and let $u_1$ be the prefix of $u$ such that $u = u_1u_2l$. If $\abs{v} \geq \abs{u_1}$, let $v_1 = v(\abs{u_1})$, and $v_2$ be the suffix of $v$ such that $v =_{A^\ast} v_1v_2$; otherwise, let $v_1 = v(\abs{v_1} - 1)$ and let $v_2$ be the suffix of $v$ with length $1$. We need to show that $u_1u_2l =_S v_1v_2$. We see that the pair $(u_1u_2l, u_1\text{nf}_{R'}(u_2lv_2^{-1})v_2)$ is an element of $S$, and that $\text{nf}_{R'}(u_1\text{nf}_{R'}(u_2lv_2^{-1})) =_{A^\ast} \text{nf}_{R'}(v_1)$. 

Consider $wt(\text{nf}_{R'}(u_2lv_2^{-1}))$. Recall that $M$ is the finite state automaton accepting all padded pairs from $R'$. Also recall from Definition \ref{FSA} that $\widehat{\delta}(q, \lambda) = q$ for all states $q$, and $\widehat{\delta}(q, aw) = \widehat{\delta}(q, aw) = \widehat{\delta}(\delta(q, a), w)$; that is, $\widehat{\delta}(q, w)$ is the state of $M$ after starting at the state $q$ and reading the word $w$. Suppose that $\abs{v_1} = \abs{u_1}$; then we see that $q = \widehat{\delta}(q_0, (u_1, v_1))$ is a state in $M$ from which we can reach a state in $P$ --- in particular, $\widehat{\delta}(q, (u_2l, v_2)^p) \in P$ since $(u_1u_2l, v_1v_2)^p$ is accepted by $M$. But since $M$ has only $k$ states, and we can replace any path in $M$ by a path which does not repeat states, we can replace this path by one of length at most $k - 1$ (potentially touring through every state, and using $k - 1$ edges in total). This shorter path corresponds to a pair of words $(u_2'', v_2'')$ such that $(u_1u_2'', v_1v_2'') \in R'$ and $\abs{u_2''} \leq k - 1$ and $\abs{v_2''} \leq k - 1$. Then we see that $u_1u_2'' =_G v_1v_2''$, hence $u_2''v_2''^{-1} =_G u_1^{-1}v_1$. We also have $u_1u_2l =_G v_1v_2$, hence $u_2lv_2^{-1} =_G u_1^{-1}v_1$. Combining these gives us $u_2''v_2''^{-1} =_G u_2lv_2^{-1}$, so $\abs{\text{nf}_{R'}(u_2lv_2^{-1})} \leq \abs{u_2''v_2''^{-1}} \leq 2k - 2$. On the other hand, suppose that $\abs{v_1} < \abs{u_1}$. Then $\abs{v_2} = 1$, and $\widehat{\delta}(q_0, (u_1, v)^p)$ is a state in $M$ from which we can reach a state in $P$. This again gives a path in $M$ of length at most $k - 1$, corresponding to a path in $\Gamma_{G, A}$ from $\overline{u_1}$ to $\overline{v}$ of length at most $2k - 2$, which can be extended with a single edge to a path from $\overline{u_1}$ to $\overline{v_1}$ of length at most $2k - 1$. Either way, we have \begin{equation}\label{eq:2kineq}\abs{nf_{R'}(u_2lv_2^{-1})} \leq 2k - 1 \end{equation} In particular, this gives us that $wt(nf_{R'}(u_2lv_2^{-1}) \leq (2k - 1) * \max\set{wt(a) \mid a \in A} < N$. We can now use our induction hypothesis: $wt(u_1nf_{R'}(u_2lv_2^{-1})) < wt(u_1u_2l) = n$, so $u_1nf_{R'}(u_2lv_2^{-1}) =_S nf_{R'}(u_1nf_{R'}(u_2lv_2^{-1}))$. Similarly, $wt(v_1v_2) \leq n$, and since $\abs{v_2} \geq 1$ we have $wt(v_1) < n$, so by our induction hypothesis $v_1 =_S nf_{R'}(v_1)$. Since $v_1 =_G u_1nf_{R'}(u_2lv_2^{-1})$, we have $nf_{R'}(v_1) =_{A^\ast} nf_{R'}(u_1nf_{R'}(u_2lv_2^{-1}))$. Stringing this all together, we have
\begin{align*}
u &=_{A^\ast} u_1u_2l \\
&=_S u_1\text{nf}_{R'}(u_2lv_2^{-1})v_2 \\
&=_S \text{nf}_{R'}(u_1\text{nf}_{R'}(u_2lv_2^{-1}))v_2 \\
&=_{A^\ast} \text{nf}_{R'}(v_1)v_2 \\
&=_S v_1v_2 \\
&=_{A^\ast} v
\end{align*}
Thus, equality of two words over $R'$ implies equality of the same two words over $S$, so equality of two words in $G$ implies equality over $S$. Combining this with the fact that equality over $S$ implies equality in $G$ (proved above), we see that $S$ is a prefix-rewriting system for $G$.

\subsection*{Bounded:}
We now consider boundedness. Each rule in $S_1$ rewrites a word of weight at most $N$ (and thus length at most $\frac{n}{\min\set{wt(a) \mid a \in A}}$) to another word of weight at most $N$, hence each rule in this subset of $S$ rewrites a substring of bounded length to a substring of bounded length.

Now, consider a pair $(u_1u_2l, u_1\text{nf}_{R'}(u_2lv_2^{-1}v_2)$ from $S_2$. We note that since $wt(u_2) < N$, we have $\abs{u_2} < \frac{N}{\min\set{wt(a) \mid a \in A}}$, so $\abs{u_2l}$ is bounded. By equation \ref{eq:2kineq} above, $\abs{\text{nf}_{R'}(u_2lv_2^{-1})} \leq 2k - 1$. Suppose for sake of contraditction that $\abs{v_2} > \abs{u_2l} + 1 + (k - 1)$; then let $v_3 = v_2(\abs{u_2l})$, and let $v_4$ be the suffix of $v_2$ such that $v_2 = v_3v_4$. Then the path in $M$ starting at $\widehat{\delta}(q_0, (u, v_1v_3))$ and labeled by $(\lambda, v_4)^p$ has length at least $k$, and must repeat a state. This gives us a decomposition $v_4 = pzs$ with $\abs{z} > 0$ such that $(u, v_1v_3pz^ns)^p$ is accepted by $M$ for all $n$, which is impossible since $v_1v_3pz^ns$ has weight larger than $wt(u)$ for sufficiently large $n$. Thus, $\abs{v_2} \leq \abs{u_2l} + k$. Thus, each rule in this subset rewrites a substring of bounded length to a substring of bounded length, so $S$ is bounded.

\subsection*{Synchronously regular:}
We next consider regularity. Let $D = \set{(a, a) \mid a \in A}$. Let $$L_{(u_2l, v_2)} = \left((R')^{p} \cap \left((A \times A)^\ast \cdot (\set{u_2l} \times \set{v_2})^{p}\right)\right) / (\set{u_2l} \times \set{v_2})^{p}$$
for each $l \in A$ and $u_2l, v_2 \in A^{\leq k + 2}$. We note that $L_{(u_2l, v_2)}$ is $\set{(u_1, v_2) \mid (u_1u_2l, v_1v_2) \in R'}$. We then let $P_{(u_2l, v_2)} = \pi_1(L_{(u_2l, v_2)})$ where $\pi_1$ is the projection map onto the first coordinate. Then $P_{(u_2l, v_2)}$ is the set of all $u_1$ such that there exists some $v_1$ with $(u_1u_2l, v_1v_2) \in R'$. Then $S_2$ is 
$$\bigcup_{u_2l \in A^{\leq k + 2}} \bigcup_{v_2 \in A^{\leq k + 2}} \left((P_{(u_2l, v_2)} \times P_{(u_2l, v_2)}) \cap D^\ast \right)^{p} \cdot (\set{u_2l} \times \set{\text{nf}_{R'}(u_2lv_2^{-1})v_2})^{p}$$
Because synchronously regular languages are closed under finite unions, finite intersections, products, projections, quotients, Kleene stars, and concatenation on the right by finite languages, and $S_2$ is built from regular languages (namely $R'$, $D$, and several finite languages) using finitely many of these operations, $S_2$ is synchronously regular. Since $S_1$ is finite, $S_1$ is also synchronously regular. Thus, $S$ is a union of two synchronously regular languages, hence $S$ is synchronously regular.

\subsection*{Terminating:}
We now consider termination. We define a partial order $\prec$ on directed edges in $\Gamma_{G, A}$ as follows: For an edge $e$, define $w(e) = \text{nf}_{R'}(\iota(e))\text{label}(e)$. Then $e' \prec e$ if $wt(w(e')) < wt(w(e))$, or $wt(w(e')) = wt(w(e))$ and there exists some rewriting sequence in $R'$ $$w(e) = w_0 \to w_1 \to \cdots \to w_n = w(e')$$ Because there is no infinite chain of rewritings over $R'$, any infinite descending chain $e_0 \succ e_1 \succ e_2 \succ \cdots$ must have some $i$ with $wt(e_i) < wt(e_0)$; repeating this argument gives a subchain $e_0 = e_0' \succ e_1' \succ e_2' \succ \cdots$ with $wt(e_i') < wt(e_{i - 1}')$, contradicting the well-foundedness of the natural numbers. Thus, $\prec$ admits no infinite descending chains, hence $\prec$ is well-founded.

Now, we extend this order on edges to an order on words over $A^\ast$ as follows: Assign to each word $u$ the set of directed edges $E_u$ contained in the path in $\Gamma_{G, A}$ starting at $1_G$ and labeled by $u$. Then $u' < u$ if there exists some $e \in E_u$ such that for all $e' \in E_{u'}$, $e' \prec e$. The relation $<$ is transitive, since whenever we have $u < v$ and $v < w$, there exist $e_v \in E_v$ and $e_w \in E_w$ with the property that for all $e' \in E_u$, $e' \prec e_v \prec e_w$, hence $e' \prec e_w$. The relation $<$ is also antisymmetric, since $u < v$ and $v < u$ would give the existence of edges $e_u \in E_u$ and $e_v \in E_v$ with $e_u \prec e_v$ and $e_v \prec e_u$, contradicting antisymmetry of $\prec$. Further, $<$ is irreflexive, since for any $e_u \in E_u$, we have $e_u \not \prec e_u$, hence $u \not < u$. Thus, $<$ is a strict partial order on $A^\ast$. Further, any infinite descending chain $u_0 > u_1 > u_2 > \cdots$ would give an infinite descending chain $e_0 \succ e_1 \succ e_2 \succ \cdots$ with $e_i \in E_{u_i}$, violating well-foundedness of $\prec$; thus, $<$ is well-founded.

We now show that $S$ decreases $<$; that is, for every $(u, v) \in S$, we have $v < u$. We have two cases to consider:

Suppose $(u, \text{nf}_{R'}(u)) \in S_1$. Let $l = \text{last}(u)$ and $u_1$ be the prefix of $u$ such that $u_1l = u$. Then let $e$ be the edge in $\Gamma_{G, A}$ starting at $u_1$ and labeled by $l$. Then we have $w(e) =_G u$, hence $wt(w(e)) \geq wt(\text{nf}_{R'}(u))$. This gives that for all $e' \in E_{\text{nf}_{R'}(u)}$ we have $wt(w(e')) < wt(w(e))$, with possibly one exception: the final edge of the path in $\Gamma_{G, A}$ starting at $1_G$ that is labeled by $\text{nf}_{R'}(u)$. As such, $e' \prec e$ for all but possibly this final edge. Now, let $e'$ be the final edge of the path in $\Gamma_{G, A}$ starting at $1_G$ that is labeled by $\text{nf}_{R'}(u)$. We notice that because normal forms of $R'$ are closed under prefixes, we have $w(e') = \text{nf}_{R'}(u)$. Then there is some rewriting sequence in $R'$ taking $w(e)$ to $\text{nf}_{R'}(w(e)) =_{A^\ast} \text{nf}_{R'}(u) =_{A^\ast} w(e')$, so $e' \prec e$. Thus, all edges in $E_{\text{nf}_{R'}(u)}$ are smaller than $e \in E_{u}$, so $\text{nf}_{R'}(u) < u$, as desired.

Now, suppose $(u_1u_2l, u_1\text{nf}_{R'}(u_2lv_2^{-1})v_2) \in S_2$. Let $e$ be the edge labeled by $l$ starting at $u_1u_2$, and let $e' \in E_{u_1\text{nf}_{R'}(u_2lv_2^{-1})v_2}$. Note that $w(e) = u_1u_2l$, since $\text{nf}_{R'}(\iota(e)) =_{A^\ast} \text{nf}_{R'}(\overline{u_1u_2}) =_{A^\ast} u_1u_2$ and $\text{label}(e) = l$. We have four cases, depending on the location of $e'$ in the path starting at $1_G$ and labeled by $u_1\text{nf}_{R'}(u_2lv_2^{-1})v_2$:
\begin{enumerate}[(i)]
\item Suppose $e'$ is an edge in the subpath starting at $1_G$ and labeled by $u_1$. Then $w(e')$ is a prefix of $u_1$ since $u_1$ is a normal form of $R'$ and normal forms of $R'$ are closed under prefixes. Thus $w(e')$ is a proper prefix of $w(e)$, so $wt(w(e')) < wt(w(e))$, showing that $e' \prec e$.
\item Suppose $e'$ is an edge in the subpath starting at $u_1$ and labeled by $\text{nf}_{R'}(u_2lv_2^{-1})$. Then $wt(w(e')) \leq wt(u_1) + 2k * \max\set{wt(a) \mid a \in A} = wt(u_1) + N < wt(u_1) + wt(u_2l) = \abs{w(e)}$, so $e' \prec e$.
\item Suppose $e'$ is an edge in the path starting at $u_1\text{nf}_{R'}(u_2lv_2^{-1})$ and labeled by $v_2$, but is not the last edge of this path. Then $wt(w(e')) < wt(v_1v_2) \leq wt(u) = wt(w(e))$, hence $e' \prec e$.
\item Finally, suppose $e'$ is the final edge of the path starting at $u_1\text{nf}_{R'}(u_2lv_2^{-1})$ and labeled by $v_2$. Then $wt(w(e')) \leq wt(v_1v_2) \leq wt(u) = wt(w(e))$. Moreover, there is a single rewriting from $R'$ which rewrites $w(e) =_{A^\ast} u_1u_2l$ to $v_1v_2$. Let $v =_{A^\ast} v_1v_2$, $l' = \text{last}(v)$ and $v'$ be the prefix of $v$ such that $v'l' =_{A^\ast} v$. Then there is a sequence of rewritings from $R'$ taking $v'$ to $\text{nf}_{R'}(v')$. Appending this rewriting sequence to our rewriting from $w(e)$ to $v_1v_2$ gives a rewriting sequence from $R'$ that starts at $w(e)$ and ends at $\text{nf}_{R'}(v')\text{last}(v) =_{A^\ast} w(e')$. Thus, $e' \prec e$.
\end{enumerate}

In all cases, we have $e' \prec e$ (and, in particular, $d_{G, A}(1_G, \iota(e')) \leq d_{G, A}(1_G, \iota(e))$, which will be useful in the proof of Theorem \ref{extraToGeo}); thus, $u_1\text{nf}_{R'}(u_2lv_2^{-1})v_2 < u_1u_2l$, as desired. So $S$ decreases a well-founded partial ordering, hence $S$ is terminating.

\subsection*{Normal forms}
Finally, we show that $S$ has a set of unique normal forms, namely the normal forms over $R'$.

Suppose that $w$ is reducible over $R'$. Then $w$ has some maximal prefix $u$ satisfying that $u$ is irreducible over $R'$. Since $u$ is a proper prefix, there exists some $l \in A$ such that $ul$ is a prefix of $w$. Then $ul$ is not irreducible over $R'$, hence $w$ has a prefix which is minimally reducible over $R'$. Then there is some rule $(ul, v)$ in $R'$ which we can apply to $w$. Further, if $wt(ul) < N$, then $(ul, \text{nf}_{R'}(ul))$ is a rule in $S_1$, so $w$ is also reducible over $S$; and if $wt(ul) \geq N$, then $ul$ is the left-hand side of a rule in $S_2$, so $w$ is reducible over $S$. Either way, we have that $w$ is reducible over $S$.

Alternatively, suppose that $w$ is reducible over $S$. Then some prefix $v$ of $w$ is the left-hand side of a rule from $S_1$ or from $S_2$, so $v$ is reducible over $R'$. As $v$ is a prefix of $w$, this gives us that $w$ is also reducible over $R'$.

Thus, the words which are reducible over $R'$ are the same as the words which are reducible over $S$. This gives that the irreducible words over both rewriting systems are the same, so $S$ has a set of unique normal forms, and $\cali{N}_S = \cali{N}_{R'} = \cali{N}_R$.

With all of the above, we see that $S$ is a bounded regular convergent prefix-rewriting system for $G$. Applying Proposition \ref{brcprs}, we see that $G$ is autostackable with normal form set $\cali{N}$.

\end{proof}

Note that, since any length non-increasing CP-RS is a weight non-increasing CP-RS with each generator having length 1, this theorem shows that any group with a length non-increasing CP-RS is autostackable. We now prove the following theorem as an extension to Theorem \ref{lniToAutost} when we have additional restrictions on $R$:

\setcounter{thm}{2}
\begin{thm} Suppose that $G = \langle A \rangle$ admits a length non-increasing \propL synchronously regular CP-RS $R$. Then the autostackable structure for $G$ constructed in the proof of Theorem \ref{lniToAutost} is a geodesically autostackable structure. \end{thm}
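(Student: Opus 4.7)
The plan is to verify the two additional conditions required for geodesic autostackability beyond those already established in the proof of Theorem~\ref{lniToAutost}: first, that every word in the normal form set $\cali{N}$ labels a geodesic in $\Gamma_{G,A}$; and second, that whenever $e'$ is a non-tree edge (i.e.\ $\Phi(e') \neq e'$) lying on the path $\Phi(e)$, one has $\alpha(e') < \alpha(e)$ strictly.

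For the geodesic property, since $R$ is length non-increasing, reducing any geodesic representative of $g \in G$ via $R$ produces a word of length at most $|g|_{G,A}$, so $\text{nf}_R(g)$ is itself geodesic. Since $R'$ and $S$ share the same normal forms as $R$, and the autostackable structure from Proposition~\ref{brcprs} has the same normal forms as $S$, every element of $\cali{N}$ labels a geodesic.

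For the $\alpha$-strict-decrease condition, I would revisit the four-case analysis of the termination order $\prec$ from the proof of Theorem~\ref{lniToAutost}: every edge $e'$ of $\Phi(e)$ falls into one of cases (i)--(iv) there. Case (i) edges lie inside the normal-form subpath labeled by $u_1$ and are tree edges, so $\Phi(e')=e'$ and they are not subject to the condition. Case (iv) is where the \propL hypothesis is essential: it yields $\text{last}(v_1 v_2)=\text{last}(\text{nf}_R(\tau(e)))$, so the final directed edge of the RHS subpath coincides with the last edge of the normal-form path from $1_G$ to $\tau(e)$ and is thus a tree edge. In cases (ii) and (iii) the termination proof gives the strict inequality $|w(e')|<|w(e)|$, which, since normal forms are geodesic, translates to $d_{G,A}(1_G,\iota(e'))<d_{G,A}(1_G,\iota(e))$.

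The main obstacle is upgrading this strict decrease on initial-vertex distances to a strict decrease of $\alpha$. The only surviving configuration in which $\alpha(e')=\alpha(e)$ is possible is when $e$ decreases distance from $1_G$ by one while $e'$ increases it by one. I plan to exclude this by combining the tight length bounds from the boundedness step of Theorem~\ref{lniToAutost}, namely $|u_2|=2k-1$, $|\text{nf}_{R'}(u_2 l v_2^{-1})|\leq 2k-1$, and $|v_2|\leq 2k$ in the length non-increasing case, with the constraint that $\iota(e')$ and $\tau(e')$ lie at specific positions along the bridge or the $v_2$-suffix, and a second invocation of the \propL hypothesis applied to the minimally reducible word $\text{nf}_R(\iota(e')) \cdot \text{label}(e')$ that must exist whenever $e'$ is non-tree. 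Together these should force the would-be equality configuration to coincide with an edge that is in fact a tree edge, completing the verification.
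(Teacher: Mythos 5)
Your overall strategy is the paper's: keep the flow function built from $S$, get geodesic normal forms from the length non-increasing hypothesis (a point the paper leaves implicit, so stating it is a plus), exempt tree edges, and use the \propL hypothesis to see that the final edge of $\Phi(e)$ is the tree edge into $\tau(e)$. The gap is that the one step carrying the real content --- ruling out $\alpha(e')=\alpha(e)$ for a non-tree edge $e'$ of $\Phi(e)$ --- is exactly the step you leave as a plan (``should force''), and the mechanism you sketch points the wrong way. Invoking \propL at the minimally reducible word $\text{nf}_R(\iota(e'))\,\text{label}(e')$ only constrains where $e'$ itself flows; it gives no information about $d_{G,A}(1_G,\iota(e'))$ or $d_{G,A}(1_G,\tau(e'))$ relative to $e$, and the would-be equality edge is not ``in fact a tree edge'' --- the correct conclusion is that such an edge cannot occupy a non-final position of $\Phi(e)$ at all.

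Here is how the configuration is actually excluded, which is what the paper is appealing to when it asserts both $d_{G,A}(1_G,\iota(e'))\leq d_{G,A}(1_G,\iota(e))$ and $d_{G,A}(1_G,\tau(e'))\leq d_{G,A}(1_G,\tau(e))$ with equality only possibly at the final edge. For a non-final edge $e'$ of the path starting at $\iota(e)$ labeled by $u_2^{-1}\text{nf}_{R'}(u_2lv_2^{-1})v_2$, the vertex $\tau(e')$ is $\iota(e'')$ for the next edge $e''$ of that path. If $e''$ is not the final edge, the same strict inequality from the termination subsection, $|w(e'')|<|w(e)|$, together with geodesity of normal forms, gives $d_{G,A}(1_G,\tau(e'))\leq d_{G,A}(1_G,\iota(e))-1$ as well as $d_{G,A}(1_G,\iota(e'))\leq d_{G,A}(1_G,\iota(e))-1$, so $\alpha(e')\leq d_{G,A}(1_G,\iota(e))-1<\alpha(e)$. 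If $e''$ is the final edge, then the \emph{first} invocation of the \propL hypothesis (applied to the rule $(u_1u_2l,v_1v_2)\in R'$ at $e$, not to a rule at $e'$) shows $e''$ is the last tree edge of the normal form of $\tau(e)$, so $d_{G,A}(1_G,\tau(e'))=d_{G,A}(1_G,\tau(e))-1$, and again $\alpha(e')<\alpha(e)$. In particular your ``$e$ drops by one while $e'$ rises by one'' scenario never arises among non-final edges, so no auxiliary argument about tight length bounds or about rules applicable at $e'$ is needed. (Two smaller inaccuracies: the flow path for an $S_2$-type edge starts at $\iota(e)$ and traverses $u_2^{-1}$ in reverse --- it never contains the $u_1$-subpath of your case (i), though those reversed edges are tree edges and exempt for the same reason; and the $S_1$-type edges, where $\text{nf}_S(\iota(e))\text{label}(e)$ is short, need a sentence of their own, most easily that there the right-hand side is a normal form so the entire path $\Phi(e)$ lies in the tree.)
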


\begin{proof}
Suppose $G = \langle A \rangle$ admits a length non-increasing \propL synchronously regular CP-RS $R$. Let $S$ be the bounded regular CP-RS for $G$ constructed in the proof of Theorem \ref{lniToAutost}, and let $\Phi$ be the flow function constructed from $S$ as in the proof of \cite[Theorem 5.3]{BHH2014}. That is, for each group element $y \in G$ and each letter $a \in A$, $$\Phi(e_{y, a}) = \begin{cases} e_{y, a} & \text{if } \text{nf}_R(y)a \in \cali{N}_R \text{ or } \text{last}(\text{nf}_R(y)) = a^{-1} \\ p_{y, s^{-1}t} & \text{otherwise}\end{cases}$$ where $p_{y, s^{-1}t}$ is the path starting at the vertex corresponding to $y$ and labeled by the word $s^{-1}t$, with $(\text{nf}(y)a, y') \in S$, $\text{nf}(y) =_{A^\ast} ws$, $y' =_{A^\ast} wt$, and the words $s$ and $t$ do not start with the same letter. 
Then each directed edge $e$ in $\Gamma_{G, A}$ falls into one of the following cases:
\begin{itemize}
\item $\text{nf}_S(\iota(e))\text{label}(e) \in \cali{N}_R$. In this case, $\Phi(e) = e$.
\item $\abs{\text{nf}_S(\iota(e))\text{label}(e)} \leq k + 1$ and $\text{nf}_S(\iota(e))\text{label}(e) \not \in \cali{N}_R$. In this case, $\text{label}(\Phi(e)) = s^{-1}s'$ for some suffix $s$ of $\text{nf}_S(\iota(e))$ and some suffix $s'$ of $\text{nf}_S(\iota(e)\text{label}(e))$. Recall from Definition \ref{geoAutost} that $\alpha(e) = \frac{1}{2}\left( d_{G, A}(1_G, \iota(e)) + d_{G, A}(1_G, \tau(e))\right)$. For each edge $e'$ in the path $\Phi(e)$ except for possibly the final edge, we have that $e'$ is an edge in the path starting at $1_G$ and labeled by a geodesic with length at most $\abs{\text{nf}_S(\iota(e))}$, hence $\alpha(e') < \abs{\text{nf}_S(\iota(e))} \leq \alpha(e)$. For the final edge $e_f$ of the path $\Phi(e)$, we have that $e_f$ is a portion of the normal form of $\iota(e)\text{label}(e)$, so $\Phi(e_f) = e_f$.
\item $\abs{\text{nf}_S(\iota(e))\text{label}(e)} > k + 1$ and $\text{nf}_S(\iota(e))\text{label}(e) \not \in \cali{N}_R$. In this case, $\text{label}(\Phi(e))$ is the word $w$ obtained by freely reducing $u_2^{-1}\text{nf}_S(u_2lv_2^{-1})v_2$ for some $u_2, v_2$, and $l$ as in the definition of $S_2$. Each edge in the path starting at $\iota(e)$ and labeled by $w$ is also an edge in the path starting at $\iota(e)$ and labeled by $u_2^{-1}\text{nf}_S(u_2lv_2^{-1})v_2$. As we proved in the termination subsection of the proof of Theorem \ref{lniToAutost}, each edge $e'$ in the path starting at $\iota(e)$ and labeled by $u_2^{-1}\text{nf}_S(u_2lv_2^{-1})v_2$ has $d_{G, A}(1_G, \iota(e')) \leq d_{G, A}(1_G, \iota(e))$ and $d_{G, A}(1_G, \tau(e')) \leq d_{G, A}(1_G, \tau(e))$, with equality only at possibly the final edge $e_f$ of this path. From this, we see that $\alpha(e') < \alpha(e)$ except when $e' = e_f$. But $e_f$ is the edge labeled by $\text{last}(\text{nf}_S(\iota(e)\text{label}(e))$ with endpoint $\iota(e)\text{label}(e)$, so $e_f$ is a portion of the normal form of $\iota(e)\text{label}(e)$, giving $\Phi(e_f) = e_f$.
\end{itemize}

In all three cases, we have that whenever $e'$ is an edge in the path $\Phi(e)$, we have either $\alpha(e') < \alpha(e)$ or $\Phi(e') = e'$. Thus, $\Phi$ is the flow function for a geodesically autostackable structure.
\end{proof}

We can also use the proof of Theorem \ref{lniToAutost} to prove almost convexity in the following theorem. The proof is similar to those found in \cite[Theorem B]{HM1997} and \cite[Theorem 4.4]{BH2015} that groups with geodesic finite complete rewriting systems and geodesically stackable groups (respectively) are almost convex. We include the details of the proof for sake of completeness.

\setcounter{thm}{1}
\begin{thm} Suppose $G = \langle A \rangle$ has a length non-increasing synchronously regular CP-RS. Then $G$ is almost convex with respect to $A$. \end{thm}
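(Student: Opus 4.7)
The plan is to apply Theorem \ref{lniToAutost} to extract from $R$ a bounded synchronously regular CP-RS $S$ with the same normal form set $\cali{N}$. Since $R$ is length non-increasing, every word in $\cali{N}$ labels a geodesic in $\Gamma_{G,A}$, so that every prefix of a normal form has length equal to its distance from $1_G$. Following the strategy of \cite[Theorem B]{HM1997} and \cite[Theorem 4.4]{BH2015}, given $g, h \in S(n)$ with $d_{G,A}(g,h) \leq 2$, I would use the rewriting of $uw$ --- with $u = \text{nf}(g)$ and $w \in A^{\leq 2}$ chosen so that $\overline{uw} = h$ --- via $S$ into $\text{nf}(h)$ to trace a bounded-length detour from $g$ to $h$ inside $B(n)$.

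Let $K$ denote the boundedness constant of $S$, and consider the reduction $uw = w_0 \to_S w_1 \to_S \cdots \to_S w_M = \text{nf}(h)$. For each rewrite by a rule $(l, r) \in S$, boundedness provides a common prefix $p$ of $l$ and $r$ with $(|l| - |p|) + (|r| - |p|) \leq K$; the $\Gamma_{G,A}$-paths labelled by $w_i$ and $w_{i+1}$ therefore agree except on a detour of length at most $K$ between $\overline{p}$ and $\overline{l} = \overline{r}$. Splicing these detours and the agreeing portions produces a candidate path from $g$ (which appears as a vertex on the path labelled $w_0$, at position $n$) to $h$ (the common endpoint of all $w_i$) of total length at most $MK$.

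To finish the argument I would verify (i) that this composite path lies in $B(n)$, and (ii) that $M$ is bounded independently of $n$. For (i), the key inputs are the distance inequalities extracted in the termination subsection of the proof of Theorem \ref{lniToAutost} --- the ones flagged as ``useful in the proof of Theorem \ref{extraToGeo}'' --- which bound the endpoint distances of each edge in the right-hand side of an $S$-rule by those of the ``rewriting edge'' (the edge labelled by the last letter of the left-hand side); combined with the length-non-increasing property of $R$ and the fact that the common prefix $u_1$ appearing in each $S_2$ rule is itself a normal-form prefix (hence traces a geodesic into $B(n-1)$ whenever the rewrite site is nontrivial), this pins each detour inside $B(n)$. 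For (ii), prefix rewrites only modify the front of the word, and once a letter is absorbed into the growing $\cali{N}$-prefix of $w_i$ it is never touched again; only the last $O(K)$ letters of each $w_i$ can participate in a rewrite, giving a bound on $M$ depending only on $K$ and the size of an FSA accepting $S$.

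The main obstacle I anticipate is the detailed verification of (i), since $S$ itself is not length non-increasing --- the $S_2$ rules can increase word length by up to $2k-1$ in the notation of the proof of Theorem \ref{lniToAutost}. Ruling out detour vertices at distance exceeding $n$ from $1_G$ will require a case analysis separating $S_1$ and $S_2$ rewrites, leaning on the uniform bound $|\text{nf}_{R'}(u_2 l v_2^{-1})| \leq 2k-1$ together with the length-non-increasing property of the original $R$ (which forces $|w_i| \leq n + 2$ through the first rewrite and confines any ``vertical excursion'' of the detour to within the sphere of radius $n$).
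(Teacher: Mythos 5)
There is a genuine gap at your step (ii). In a bounded prefix-rewriting system the rewrite always occurs at the end of the maximal irreducible prefix, and after an $S_2$-rewrite that irreducible prefix can \emph{shrink}: the right-hand side $u_1\text{nf}_{R'}(u_2lv_2^{-1})v_2$ need not be irreducible past $u_1$, so the rewrite site regresses toward the start of the word by a bounded amount at each step. Consequently letters already ``absorbed'' into the irreducible prefix are touched again, and the number $M$ of $S$-rewrites needed to reduce $\text{nf}(g)w$ to $\text{nf}(h)$ is in general linear in $n$, not bounded. The example of Section \ref{noAConverse} shows this concretely: reducing $a^ita^jta$ to $a^{i+1}ta^jt$ in the bounded system $S$ built from the shortlex rules proceeds by rewrites each of which only shortens the exponent $j$ in the reducible prefix by a bounded amount, so roughly $j/2$ rewrites are needed. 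Your bound $MK$ on the spliced path is therefore unbounded in $n$, and the argument as written does not prove almost convexity. (Your step (i) is in the right spirit --- the inequalities $d_{G,A}(1_G,\iota(e'))\leq d_{G,A}(1_G,\iota(e))$ from the termination subsection are indeed what keeps each detour inside $B(n)$ --- but it cannot rescue an unbounded number of detours.)

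The paper's proof avoids this by not following the $S$-reduction step by step. Starting from the minimally reducible word $\text{nf}_S(ha_0^{-1})a_0$, it applies a \emph{single} $S$-rule, obtaining $w_0v_0a_1$, and then jumps directly to $\text{nf}_S(w_0v_0)a_1 = \text{nf}_S(ha_1^{-1})a_1$; this jump costs nothing geometrically because $\overline{w_0v_0}=\overline{\text{nf}_S(w_0v_0)}=ha_1^{-1}$ is the same vertex, and only the bounded detour $u_0^{-1}v_0$ is added to the path. The stages are indexed by the last letters $a_i$, and since $\text{nf}_S(ha_i^{-1})a_i$ is determined by $a_i$, a repeated letter would produce a loop of $S$-rewritings, contradicting termination; hence there are at most $\abs{A}$ stages and the path has length at most $(4k+4)\abs{A}+1$ (distance $1$), with the distance $2$ case handled by a short case analysis on $d(1_G,ga)$. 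This counting-by-last-letter mechanism, together with the free ``jump to normal form,'' is the key idea missing from your proposal; some substitute for it is needed to get a bound independent of $n$.
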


\begin{proof}
Suppose $G = \langle A \rangle$ has a length non-increasing synchronously regular CP-RS. Define a bounded regular CP-RS $S$ for $G = \langle A \rangle$ as in the proof of Theorem \ref{lniToAutost}, and let $g, h \in G$ with $g, h \in S(n)$ and $d_{G, A}(g, h) \leq 2$.

In the case that $d_{G, A}(g, h) = 1$, let $a_0 \in A$ such that $ga_0 =_G h$. Then $\text{nf}_S(g) = w_0u_0$ with $(w_0u_0a_0, w_0v_0a_1)$ being a rule in $S$ for some $w_0, u_0, v_0 \in A^\ast$, and $a_1 \in A$. As a consequence of the proof of termination in the proof of Theorem \ref{lniToAutost}, the path starting at $\overline{w_0}$ and labeled by $v_0$ lies entirely within $B(n)$. If $a_1 = \text{last}(\text{nf}_S(h))$, then $e_{ha_1^{-1}, a_1}$ lies in $B(n)$, and we see that the path labeled by $u_0^{-1}v_0a_1$ starting at $g$ ends at $h$, has length at most $2k + 2 + 2k + 2 + 1 = 4k + 5$, and lies entirely within $B(n)$. Otherwise, we can repeat this process: $w_0v_0a_1 =_G \text{nf}_S(w_0v_0)a_1 =_{A^\ast} w_1u_1a_1$, with some rule $(w_1u_1a_1, w_1v_2a_2)$ in $S$. In this way, we get a chain of equalities and rewritings $$w_0u_0a_0 \to_S w_0v_0a_1 =_G w_1u_1a_1 \to_S \cdots \to_S w_jv_ja_{j + 1}$$ where $a_{j + 1} = \text{last}(\text{nf}_S(h))$, and $\abs{w_iu_i} \leq n$ and $\abs{w_iv_i} \leq n$ for all $i$. This gives a path from $g$ to $h$ labeled by $u_0^{-1}v_0u_1^{-1}v_1\dots u_j^{-1}v_ja_{j + 1}$ which lies entirely within $B(n)$. Each $u_i^{-1}v_i$ piece of the path has length at most $4k + 4$ (from the proof of boundedness in Theorem \ref{lniToAutost}). We cannot repeat any $a_i$ because having $a_i = a_k$ for some $k > i$ would give a loop of rewritings $w_iu_ia_i \to_S \cdots \to_S w_ku_ka_k$, where both $w_iu_i$ and $w_ku_k$ are the unique normal form of $ha_i^{-1}$. Thus, there are at most $\abs{A}$ pieces in our path of the form $u_i^{-1}v_i$, plus the final edge $a_{j + 1}$. In this case, we have a path in $B(n)$ from $g$ to $h$ of length at most $(4k + 4)\abs{A} + 1$.

Now, we consider the case that $d(g, h) = 2$. Then $h = gab$ for some $a, b \in A$. There are three subcases. If $d(1_G, ga) = n - 1$, we have a path of length 2 from $g$ to $h$ lying within $B(n)$, namely the path starting at $g$ and labeled by $ab$. If $d(1_G, ga) = n$, we can apply the distance 1 case twice, giving a path of length at most $2(4k + 4)\abs{A}$ from $g$ to $h$ lying within $B(n)$. This leaves the case where $d(1_G, ga) = n + 1$. Let $c = \text{last}(\text{nf}_S(ga))$, and $g' = gac^{-1}$. We now need to provide a path in $B(n)$ from $g$ to $g'$ of bounded length, and can repeat the process to make a path from $g'$ to $h$. If $g = g'$, then the path of length 0 from $g$ to $g'$ lies within $B(n)$, so suppose instead that $g \neq g'$. Then $\text{nf}_S(g)a$ is not itself a normal form, hence $\text{nf}_S(g)a = w_0u_0a_0$; this rewrites to $w_0v_0a_1$, and again $v_0$ lies entirely within $B(n)$, by the same argument as the distance 1 case. Again, we have a chain of equalities and rewritings $$w_0u_0a_0 \to w_0v_0a_1 =_G w_1u_1a_1 \to \cdots w_jv_ja_{j + 1}$$ where $a_{j + 1} = \text{last}(\text{nf}_S(g'))$, and each $w_i, u_i, v_i \in B(n)$. This again gives a path of length at most $(4k + 4)\abs{A}$ from $g$ to $g'$ lying within $B(n)$. Repeating the process for a path from $g'$ to $h$ gives a path from $g$ to $h$ of length at most $2(4k + 4)\abs{A}$.

Thus, we have a path of length at most $2(4k + 4)\abs{A}$ from $g$ to $h$ in $B(n)$ whenever $g, h \in S(n)$ and $d_{G, A}(g, h) \leq 2$, so $G$ is almost convex with respect to $A$.
\end{proof}

\section{Proof of Theorem D} \label{ThmA}
In this section, we produce a length non-increasing regular CP-RS with short reverse lexicographic normal forms for a pair $(G, A)$ with FFTP. We begin with a lemma that will allow us to handle geodesics which are not short reverse lexicographic normal forms:

\begin{lemma}\label{doubleFellowTravel}
Suppose the pair $(G, A)$ has FFTP with fellow traveler constant $k$, $A$ is totally ordered, and $u \in A^\ast$ is a word representing $g \in G$ which is not a short reverse lexicographic normal form. Then there exists some word $v$ with $v =_G u$, $v <_{srev} u$, and $u$ and $v$ $2k$-fellow travel.
\end{lemma}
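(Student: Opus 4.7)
The plan is to split on whether $u$ is a geodesic word for its group element. If $u$ is not geodesic, then FFTP applied directly to $u$ already produces a word $v$ with $\abs{v} < \abs{u}$, $v =_G u$, and $u, v$ $k$-fellow travelling; since $\abs{v} < \abs{u}$ gives $v <_{srev} u$ and $k$-fellow travelling implies $2k$-fellow travelling, we are done in this case.

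Suppose therefore that $u$ is geodesic. By hypothesis there is some representative $w =_G u$ with $w <_{srev} u$. Because $u$ is geodesic we have $\abs{w} \ge \abs{u}$, while the srev inequality forces $\abs{w} \le \abs{u}$; hence $\abs{w} = \abs{u}$ and $w$ is a second geodesic for $\overline{u}$. Let $z$ be the longest common suffix of $u$ and $w$, and write $u = x_1 a z$ and $w = x_2 b z$ with $a \neq b$ in $A$. The definition of $<_{srev}$ applied to the common suffix $z$ forces $b \prec a$. Then $\abs{x_1} = \abs{x_2}$, the prefixes $x_1 a$ and $x_2 b$ are themselves geodesics (as prefixes of geodesics), and $\overline{x_1 a} = \overline{x_2 b}$ in $G$.

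The key word to study is $x_1 a b^{-1}$: it has length $\abs{x_1} + 2$ but represents $\overline{x_2}$, which has geodesic length $\abs{x_1}$, so $x_1 a b^{-1}$ is non-geodesic. Apply FFTP to obtain a witness of length at most $\abs{x_1} + 1$ that $k$-fellow travels $x_1 a b^{-1}$. If that witness is geodesic (length $\abs{x_1}$), call it $\hat v$; otherwise it has length $\abs{x_1} + 1$ and is itself non-geodesic, so a second application of FFTP produces a shorter witness, necessarily of length $\abs{x_1}$ (by comparison with the geodesic length of $\overline{x_2}$), which we call $\hat v$. The triangle inequality for prefix distances then gives that $\hat v$ and $x_1 a b^{-1}$ $2k$-fellow travel, with $\hat v =_G x_2$ and $\abs{\hat v} = \abs{x_1}$.

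Now set $v := \hat v\, b\, z$. Then $v =_G \overline{x_2 b z} = \overline{u}$, and $\abs{v} = \abs{x_1} + 1 + \abs{z} = \abs{u}$. Since $v$ and $u$ share exactly the suffix $z$ and differ at the preceding letter with $b \prec a$, the definition of $<_{srev}$ yields $v <_{srev} u$. For the fellow travel condition: when $n \le \abs{x_1}$ we have $v(n) = \hat v(n)$ and $u(n) = x_1(n) = (x_1 a b^{-1})(n)$, so $d(v(n), u(n)) \le 2k$ by the fellow travelling established above; and when $n \ge \abs{x_1} + 1$ the prefixes $v(n)$ and $u(n)$ represent the same group element (because $\overline{\hat v\, b} = \overline{x_2 b} = \overline{x_1 a}$), so their distance is $0$. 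The main obstacle is controlling the FFTP witness for $x_1 a b^{-1}$ down to a geodesic word of length $\abs{x_1}$, which may require iterating FFTP a second time; this double iteration is precisely where the fellow travel constant doubles from $k$ to $2k$ in the conclusion.
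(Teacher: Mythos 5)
Your proposal is correct and follows essentially the same route as the paper: the same split into non-geodesic and geodesic cases, the same decomposition (your $x_1 a$, $b$, $z$ are the paper's $u_1$, $l$, $u_2$), the same key non-geodesic word $x_1ab^{-1} = u_1l^{-1}$, and the same one-or-two applications of FFTP with a triangle inequality giving the constant $2k$. The only cosmetic differences are that you take $w$ to be an arbitrary $<_{srev}$-smaller geodesic representative rather than the normal form, and you verify $v <_{srev} u$ directly from the definition rather than via the longer common suffix with $w$; both are fine.
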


\begin{proof}
Suppose $(G, A)$ has FFTP with fellow traveler constant $k$, and $u \in A^\ast$ is a word representing $g \in G$ which is not a short reverse lexicographic normal form. If $u$ is not geodesic, we take $v$ to be any witness of $u$; then $v =_G u$, $\abs{v} < \abs{u}$ so $v <_{srev} u$, and $u$ and $v$ $k$-fellow travel, hence also $2k$-fellow travel.

Now, suppose $u$ is geodesic. Then let $w$ be the short reverse lexicographic normal form of $g$, and $u_2$ the longest common suffix of $u$ and $w$, so $u = u_1u_2$ and $w = w_1lu_2$ for some $w_1 \in A^\ast$ and $l \in A$. Then $u_1l^{-1}$ is not a geodesic, having length $\abs{u_1} + 1$ while a geodesic representative of the same element has length $\abs{u_1} - 1$. Because $(G, A)$ has FFTP, there exists some word $v'$ with $\abs{v'} < \abs{u_1} + 1$, $v' =_G u_1l^{-1}$, and $v'$ and $u_1l^{-1}$ $k$-fellow travel. If $\abs{v'} = \abs{u_1} - 1$, we take $v'' = v'$; otherwise, $\abs{v'} = \abs{u_1}$, so there exists some $v''$ with $\abs{v''} = \abs{u_1} - 1$, $v'' =_G v'$, and $v''$ and $v'$ $k$-fellow travel. Now, let $v = v''lu_2$. Then $v$ is short reverse lexicographically smaller than $u$, since the two have the same length, $v$ has a longer common suffix with $w$ than $u$ does with $w$, and $w$ is short reverse lexicographically smaller than $u$. Further, $u$ and $v$ $2k$-fellow travel, since $u_1$ and $v''l$ $2k$-fellow travel and $u$ and $v$ extend these by the same suffix.
\end{proof}

\begin{figure}
\begin{tikzpicture}[scale = 2, every node/.style={circle, draw, fill= black, scale = 0.3}]
\node(1) at (0, 0)[label = left:\fontsize{40}{40}{$1_G$}]{};
\node(u1) at (4, 0){};
\node(u1u2) at (5, 0)[label = right: \fontsize{40}{40}{$g$}]{};
\node(w1) at (3.75, -0.25){};
\draw [decorate, decoration = random steps, -{Latex[length = 3mm, width = 2mm]}] (u1) -- (u1u2) node[coordinate, behind path, midway, label = above: \fontsize{40}{40}{$u_2$}]{};
\draw [decorate, decoration = random steps, rounded corners, -{Latex[length = 3mm, width = 2mm]}] (1) -- (1, 1) -- (2, 1.5) -- (3, 1) -- (u1) node[coordinate, behind path, midway, label = above:\fontsize{40}{40}{$u_1$}] {};
\draw [decorate, decoration = random steps, rounded corners, -{Latex[length = 3mm, width = 2mm]}] (1) -- (1, 0.6) -- (2, 0.7) -- (3, 0.6) -- (w1) node[coordinate, behind path, midway, label = above:\fontsize{40}{40}{$v'$}] {};
\draw [decorate, decoration = random steps, -{Latex[length = 3mm, width = 2mm]}] (1) -- (1, 0.25) -- (2, 0.4) -- (w1) node[coordinate, behind path, midway, label = above:\fontsize{40}{40}{$v''$}] {};
\draw [decorate, decoration = random steps, rounded corners, -{Latex[length = 3mm, width = 2mm]}] (1) -- (1, -0.5) -- (2, -0.75) -- (3, -0.5) -- (w1) node[coordinate, behind path, midway, label = below:\fontsize{40}{40}{$w_1$}] {};
\draw [-{Latex[length = 3mm, width = 2mm]}](w1) -- (u1) node[coordinate, behind path, midway, label = below: \fontsize{40}{40}{$l$}] {};
\end{tikzpicture}
\caption{While $u_1$ and $w_1l$ might be quite far apart, we see that $u_1$ and $v'$ $k$-fellow travel, and similarly $v'$ and $v''$ $k$-fellow travel.}
\label{fig: l41}
\end{figure}
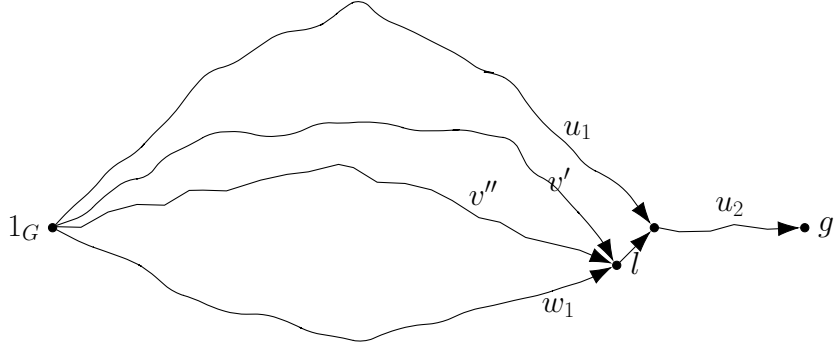

The proof of Lemma $\ref{doubleFellowTravel}$ is illustrated in Figure $\ref{fig: l41}$. We now prove the main theorem.

\setcounter{thm}{3}
\begin{thm}
[Main Theorem] Suppose the pair $(G, A)$ has the falsification by fellow traveler property. Then
\begin{enumerate}[(a)]
\item $G$ admits a length non-increasing \propL synchronously regular CP-RS $R$; and
\item $G$ is geodesically autostackable.
\end{enumerate}
\end{thm}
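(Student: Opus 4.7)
The plan is to construct directly from the FFTP data a length non-increasing \propL synchronously regular CP-RS $R$ whose normal form set $\cali{N}$ consists of the short reverse lexicographic (srev) geodesic representatives, and then to deduce part (b) as an immediate application of Theorem \ref{extraToGeo}. Fix a total order $\prec$ on $A$. First I would verify that $\cali{N}$ is regular and prefix-closed. Prefix-closure is routine: if $u \in \cali{N}$ and $u = u' s$, any srev-smaller representative $u''$ of $\overline{u'}$ would give $u'' s <_{srev} u' s = u$, contradicting $u \in \cali{N}$. Regularity follows from Lemma \ref{doubleFellowTravel}: a word $u$ lies outside $\cali{N}$ iff there exists $v <_{srev} u$ with $v =_G u$ that $2k$-fellow travels with $u$, which is FSA-checkable by tracking the relative position $\overline{u(i)}^{-1}\overline{v(i)}$ in the ball of radius $2k$ about $1_G$.

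I would then define $R$ to consist of those pairs $(u, v)$ for which $u$ is minimally reducible with respect to $\cali{N}$, $v =_G u$, $v <_{srev} u$, $u$ and $v$ $C$-fellow travel for some fixed constant $C$ depending only on $k$, and $\text{last}(v)$ equals the last letter of the srev normal form of $\overline{u}$. Existence of such a $v$ for every minimally reducible $u = wl$ comes from Lemma \ref{doubleFellowTravel} together with FFTP. In the geodesic case, the construction underlying Lemma \ref{doubleFellowTravel} already produces $v = v'' l' u_2$ whose suffix $l' u_2$ agrees with that of $\text{nf}_R(\overline{u})$, forcing the last-letter match. In the non-geodesic case, one applies FFTP to the (very) non-geodesic word $u \ell^{-1}$ where $\ell$ is the last letter of $\text{nf}_R(\overline{u})$, post-multiplies a resulting witness by $\ell$, and iterates FFTP or Lemma \ref{doubleFellowTravel} as needed to both drop the length strictly below $|u|$ and preserve the matching last letter.

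The CP-RS axioms are then routine: termination holds because every rule strictly decreases the well-founded srev ordering on $A^\ast$, and the irreducible words of $R$ coincide with $\cali{N}$ by prefix-closure (every word outside $\cali{N}$ has a minimally reducible prefix on which some rule of $R$ fires). Length non-increasing is immediate from $v <_{srev} u$, and end-normal follows from the defining last-letter condition combined with the identity $\text{nf}_R(v) = \text{nf}_R(u)$. Synchronous regularity reduces to showing each defining condition on $(u, v)$ is regular; minimal reducibility of $u$, the srev comparison, and the combination of $v =_G u$ with bounded fellow travel are each standard two-tape FSA constructions. The main technical obstacle will be the last-letter matching condition: I would handle it by recasting it as a property of $v$ alone, namely that $v$ and the srev normal form of $\overline{v}$ share a last letter, and showing that this property is regular via a further FFTP-based fellow-traveler witness search using Lemma \ref{doubleFellowTravel}. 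Once $R$ is in hand, Theorem \ref{extraToGeo} upgrades the autostackable structure of Theorem \ref{lniToAutost} to a geodesically autostackable one, establishing part (b).
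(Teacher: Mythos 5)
Your proposal is correct in outline and shares the paper's skeleton: short reverse lexicographic normal forms, witnesses supplied by Lemma \ref{doubleFellowTravel}, restriction to minimally reducible left-hand sides, and Theorem \ref{extraToGeo} for part (b); your existence argument in the non-geodesic case (apply FFTP to $u\ell^{-1}$, append $\ell$, re-shorten the prefix) is a harmless variant of the paper's $4k$-chaining. Where you genuinely diverge is the one delicate point, namely how the \propL requirement is made synchronously regular. The paper never imposes $\text{last}(v)=\text{last}(\text{nf}(\overline{v}))$ as a membership test: it takes \emph{all} $4k$-fellow-travelling $<_{srev}$-decreasing pairs with minimally reducible left sides and then, using only Boolean operations, projections and quotients of regular languages, selects for each left side the rules with the largest length drop and the $\prec$-least last letter on the right; end-normality is deduced \emph{afterwards} from the mere existence of a witness ending in the normal form's last letter, so regularity comes for free from closure properties. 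You instead build the last-letter condition into the definition of $R$, and must therefore prove that the condition ``$v$ ends in the last letter of $\text{nf}_{srev}(\overline{v})$'' cuts out a regular set --- this is where the real work hides, and your one-line plan glosses it. A fellow-traveller witness search only certifies one-sided statements, so you need both ``no geodesic representative with a $\prec$-smaller last letter boundedly fellow travels $v$'' (a complement of a projection) and ``some geodesic representative with last letter $\preceq \text{last}(v)$ boundedly fellow travels $v$''; completeness of both searches rests on (i) the specific witness built in the \emph{proof} of Lemma \ref{doubleFellowTravel}, which ends in the normal form's last letter whenever the last letters differ, and (ii) the fact that every right-hand side occurring in your $R$ exceeds geodesic length by at most $2$ (since $u$ is minimally reducible and $v<_{srev}u$), so some geodesic representative fellow travels $v$ within a uniform constant --- for arbitrary words the property is not accessible by a bounded search, so the equivalence must be invoked only on this bounded-excess domain. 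With that made explicit your route goes through and produces an $R$ of the same kind; the trade-off is a cleaner, more direct definition of $R$ at the cost of this extra regularity argument, versus the paper's selection trick, which keeps regularity trivial at the cost of a more indirect definition.
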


\begin{proof}
Suppose $(G, A)$ has FFTP with fellow traveler constant $k$. Let $\prec$ be a total ordering on $A$, and let $\cali{N}$ be the set of short reverse lexicographic normal forms of $G$. Let $$L = \set{(u, v) \mid u =_G v, v <_{srev} u, \text{ and } u \text{ and } v \text{ } 4k\text{-fellow travel}}.$$ We create an FSA $M$ accepting $L^p$ as follows:

The alphabet of $M$ is $A \times (A \cup \set{\$})$. For each element $g \in G$ with $\abs{g} \leq 4k$, we have two states: one labeled by $\text{nf}_{srev}(g)$, and one labeled by $\text{nf}_{srev}(g)\$$. We also have one fail state $F$ and one additional state $\lambda'$. The accept states of $M$ are $\lambda'$ and $\lambda\$$, and the initial state is $\lambda$. We define the transition function $\delta$ with three parts:
$$\text{diff}(s, (a, b)) = \begin{cases} \text{nf}_{srev}(a^{-1}gb) & \text{if } s \text{ is labeled by } \text{nf}_{srev}(g),\\
& \hspace{8pt} \text{by nf}_{srev}(g)\$, \text{ or by } \text{nf}_{srev}(g)' \text{ for some } g \in G \\
F & \text{if } s = F \\
\end{cases} $$
$$\text{vterm}(s, (a, b)) = \begin{cases} F & \text{if } s = F \\ 
\$ & \text{if } s \neq F \text{ and } b = \$ \\ 
F & \text{if } s = \text{nf}_{srev}(g)\$ \text{ for some } g \in G \text{ and } b \neq \$ \\ 
\lambda & \text{otherwise} \\
\end{cases} $$
$$\text{srev}(s, (a, b)) = \begin{cases} 1 & \text{if } a, b \in A \text{ and } b \prec a \\
1 & \text{if } s = \lambda' \text{ and } a = b \\
0 & \text{otherwise} \\
\end{cases} $$

Now, we define our transition function by 
$$\delta(s, (a, b)) = \begin{cases}
F & \text{if diff}(s, (a, b)) = F \\
& \hspace{16pt} \text{or vterm}(s, (a, b)) = F\\
& \hspace{16pt} \text{or } \abs{\text{diff}(s, (a, b))} > 4k \\
\lambda' & \text{if diff}(s, (a, b)) = \lambda, \\
& \hspace{16pt} \text{vterm}(s, (a, b)) = \lambda, \\
& \hspace{16pt} \text{and srev}(s, (a, b)) = 1 \\
\text{diff}(s, (a, b))\text{vterm}(s, (a, b)) & \text{otherwise}
\end{cases} $$
The function $\text{diff}(s, (a, b))$ tracks the word difference between the two input words $u$ and $v$ as long as these words $4k$-fellow travel, $u$ does not terminate before $v$, and $v$ does not have a padding letter between letters from $A$. The function $\text{vterm}(s, (a, b))$ tracks whether $v$ has terminated and whether $v$ has a padding letter between letters from $A$. The function $\text{srev}(s, (a, b))$ tracks whether the most recent pair of non-identical letters was in decreasing order.

We now prove that the language accepted by $M$ is $L^p$. We first notice that $\text{vterm}$ prevents $M$ from accepting any words which are not padded pairs, so we can restrict our proof to deal only with padded pairs.

Suppose $(u, v)^p$ is accepted by $M$. Recall that $\widehat{\delta}(s, w)$ is the state of $M$ after starting at a state $s$ and reading a word $w$. Then $\widehat{\delta}(\lambda, (u, v)^p)$ is either $\lambda\$$ or $\lambda'$. In particular, this requires $u^{-1}v =_G 1$, since $\text{diff}$ (which tracks the word difference between prefixes of $u$ and of $v$) ended at $\lambda$. Further, reading $(u, v)^p$ avoids landing at $F$, so $\abs{\text{diff}} \leq 4k$ at each step; thus, $u$ and $v$ must $4k$-fellow travel. We now consider the role of $\text{vterm}$: either this function ended at $\$$, or $\text{vterm}$ ended at $\lambda$ and $\text{srev}$ ended at $1$. In the first case, $(u, v)^p$ reached a padding letter in the second coordinate, meaning that $\abs{v} < \abs{u}$, so $v <_{srev} u$. In the second case, $(u, v)^p$ had no padding symbols, so $\abs{u} = \abs{v}$, but $\text{srev}$ ended at $1$. Then either $\text{last}(v) \prec \text{last}(u)$, so that $v <_{srev} u$, or the state before reading the last letter of $(u, v)^p$ was also $\lambda'$ and $\text{last}(u) = \text{last}(v)$. Continuing this reasoning, we see that $u =_{A^\ast} u_1u_2$, $v =_{A^\ast} v_1u_2$, and $\text{last}(v_1) \prec \text{last}(u_1)$ for some words $u_1, u_2, v_1 \in A^\ast$. In this case, we again have that $v <_{srev} u$. Thus, the language accepted by $M$ is a subset of $L^p$.

Now, suppose that $(u, v)^p$ satisfies $v <_{srev}u$, $u=_G v$, and $u$ and $v$ $4k$-fellow travel. Then starting at $\lambda$ and reading $(u, v)^p$, $\abs{\text{diff}} \leq 4k$ at each step, since $u$ and $v$ $4k$-fellow travel and $\text{vterm}$ is never $F$ because $(u, v)^p$ is a padded pair, so we never reach the state $F$. Since $u =_G v$, $\widehat{\delta}(\lambda, (u, v)^p)$ must be $\lambda$, $\lambda\$$, or $\lambda'$. If $\abs{v} < \abs{u}$, then we have $\text{vterm}$ reaches $\$$ after $v$ ends, so we must end at $\lambda\$$. Otherwise, since $v <_{srev} u$, we must have that $v$ is reverse lexicographically smaller than $u$. Thus, $\text{srev}$ ends at $1$, meaning that we end at $\lambda'$. In either case, $M$ accepts $(u, v)^p$, so $L^p$ is a subset of the language accepted by $M$.  Therefore, $M$ accepts exactly the language $L^p$.

Now, we consider $L$. We first note that $u =_G v$ for all $(u, v) \in L$. Further, for each word $u$ which is not a short reverse-lexicographic normal form, there exists some $v \in A^\ast$ with $v <_{srev} u$ and $u$ and $v$ $2k$-fellow travel by Lemma \ref{doubleFellowTravel}, so $(u, v) \in L$. The ordering $<_{srev}$ is well-founded, so each word which is not a short reverse-lexicographic normal form can be rewritten to its normal form using finitely many rules from $L$, so any two words which are equal in $G$ can be rewritten to each other using finitely many relations of the form $u = v$ with $(u, v) \in L$. Thus, $G = \text{Mon}\langle A | L \rangle$, so $L$ is a prefix-rewriting system for $G$. Because $<_{srev}$ is a well-founded ordering and for all $(u, v) \in L$ we have $v <_{srev} u$, $L$ is terminating, and because each word $u \in A^\ast$ which is not the minimal representative of a group element under this total ordering is the left-hand side of a pair, we have that $L$ has unique normal forms, hence $L$ is convergent. Further, since $L^p$ is the language accepted by $M$, $L$ is synchronously regular. Finally, since $L$ never increases length, we have that $L$ is a length non-increasing synchronously regular convergent prefix-rewriting system for $G$ with generating set $A$. By Theorem \ref{lniToAutost}, $G$ is autostackable.

To create a length non-increasing synchronously regular CP-RS which is also \propL, we consider a sequence of languages. First, let $L' = L \cap (\cali{N}A \times A^\ast)^p$; that is, $L'$ consists of pairs $(u, v)$ such that every proper prefix of $u$ is a short reverse lexicographic normal form. By a similar argument as in the proof of Theorem \ref{lniToAutost}, $L'$ is still a length non-increasing synchronously regular prefix-rewriting system for $G$ with generating set $A$, with normal form set $\cali{N}$. Next, we define $L'_{2} = L' \cap (A^\ast \times A^\ast\$\$)$, $L'_{1} = L' \cap (A^\ast \times A^\ast \$)$, and $L'_0 = L' \cap (A^\ast \times A^\ast)$. That is, $L'_{i}$ is the subset of $L'$ consisting of pairs $(u, v)$ where $\abs{u} = \abs{v} + i$. We next recursively define three languages for each letter in $A$, and two languages $L_i$ for $i = 1, 2$: let 
$$L_{2, a} = \left(L'_2 \cap (A^\ast \times A^\ast a)^p\right) \setminus \left(\bigcup_{b \prec a} \pi_1(L_{2,b})\times A^\ast\right)^p,$$
$$L_2 = \bigcup_{c \in A} \pi_1(L_{2, c}) \times A^\ast,$$
$$L_{1, a} = \left(L'_1 \cap (A^\ast \times A^\ast a)^p\right) \setminus \left(L_2 \cup \bigcup_{b \prec a} \left(\pi_1(L_{1,b})\times A^\ast\right)\right)^p,$$
$$L_1 = \bigcup_{c \in A} \pi_1(L_{1, c}) \times A^\ast,$$
and 
$$L_{0, a} = \left(L'_0 \cap (A^\ast \times A^\ast a)^p \right) \setminus \left(L_2 \cup L_1 \cup \bigcup_{b \prec a} \left(\pi_1(L_{0, b}) \times A^\ast \right)\right)^p.$$
That is, $L_{i, a}$ is the set of all $(u, v)^p \in L'$ such that $\abs{u} = \abs{v} + i$, $\text{last}(v) = a$, and there is no $v' \in A^\ast$ such that $(u, v') \in L'$ and $v' <_{srev}v$. Again, using a similar argument as at the start of the proof of Theorem \ref{lniToAutost}, we have that $L'' = \cup_{i = 0}^2\cup_{a \in A} L_{i, a}$ is a length non-increasing synchronously regular prefix-rewriting system for $G$ with generating set $A$. Moreover, in Lemma \ref{doubleFellowTravel} we showed that any non-normal form geodesic $u$ $2k$-fellow travels a word $v$ with $u =_G v$, $\abs{v} = \abs{u}$ and $\text{last}(v) = \text{last}(\text{nf}_R(u))$, so for any pair $(u, v) \in L''$ where $u$ is a geodesic, we have that $\text{last}(v) = \text{last}(\text{nf}_R(u))$. In the case that $u$ is not geodesic, since $u = wl$ for some geodesic normal form $w$ and some letter $l \in A$, we have that $u$ must have length at most $\abs{\text{nf}_R(u)} + 2$, hence it $k$-fellow travels some shorter word $v_1$, which then $k$-fellow travels some geodesic $v_2$, which then $2k$-fellow travels some geodesic $v$ with $\text{last}(v) = \text{last}(\text{nf}_R(u))$. Thus, each non-geodesic $u$ $4k$-fellow travels a geodesic word $v$ with $\text{last}(v) = \text{last}(\text{nf}_R(u))$, so each rule $(u, v)$ in $L''$ has $\text{last}(v) = \text{last}(\text{nf}_R(u))$.. Thus, $L''$ is a length non-increasing \propL synchronously regular CP-RS, completing the proof of part (a) of the theorem. Now, $L''$ satisfies the hypotheses of Theorem \ref{extraToGeo}, so there is a geodesically autostackable structure for $G$ with generating set $A$.
\end{proof}

When working with the finite state automata constructed through Theorem \ref{lniToAutost}, we notice that there is some potential room for improvement when working with the rewriting systems from pairs $(G, A)$ with FFTP. In particular, rather than using the pumping number for an FSA accepting $L$ (or $L''$), we can use four times the fellow traveler constant. This can decrease the size of the automata created in Theorem \ref{lniToAutost}. There are pairs $(G, A)$ for which Theorem \ref{lniToAutost} is useful which do not have FFTP, so we opted for a proof covering a wider class of groups rather than the more efficient construction in Section \ref{ThmBC}.

\section{Disproving the converse of part (a) of the Main Theorem}\label{noAConverse}
A natural question, given the first part of Theorem \ref{mainThm}, is whether having a length non-increasing regular CP-RS for a pair $(G, A)$ implies that $(G, A)$ has FFTP. In this section, we answer the question in the negative, using the following example.

\setcounter{thm}{4}
\begin{thm} The group $G = \bb{Z}^2 \rtimes \bb{Z}_2 = \langle a, b, t \mid [a, b] = 1, t^2 = 1, tat = b \rangle$ has a length non-increasing regular CP-RS with generating set $A = \set{a, t}$, but the pair $(G, A)$ does not have FFTP. \end{thm}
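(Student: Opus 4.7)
The plan has two independent parts: an explicit construction of the CP-RS and an explicit family of non-geodesic words with no $k$-fellow traveling witness as $n \to \infty$.

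For part (a), I would exploit the splitting $G \cong \mathbb{Z}^2 \rtimes \mathbb{Z}_2$ (with $t$ swapping the $\mathbb{Z}$-generators) to define geodesic normal forms $\mathcal{N}$ directly. Writing $\bar a := a^{-1}$, set $\mathcal{N}$ to consist of the words of the form $[m]_+ t^{\epsilon_1} [n]_+ t^{\epsilon_2}$, where $[k]_+$ is $a^k$ for $k \geq 0$ and $\bar a^{-k}$ for $k < 0$, $\epsilon_1, \epsilon_2 \in \{0,1\}$, and the trivial-block conventions (if $\epsilon_1 = 0$ then $n = \epsilon_2 = 0$; if $\epsilon_2 = 1$ then $n \neq 0$) hold. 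This is a prefix-closed set of geodesic representatives, one per element of $G$. The CP-RS $R$ is a union of finitely many synchronously regular rule families, one template per way a normal form can be extended by a single letter to leave $\mathcal{N}$: the inverse cancellations $(a^m \bar a, a^{m-1})$, the $t$-doublings $(a^m tt, a^m)$, the inner-block cancellations $(a^m t a^n \bar a, a^m t a^{n-1})$, the length-preserving ``swap'' rules $(a^m t a^n t a, a^{m+1} t a^n t)$ and $(t a^n t \bar a, \bar a t a^n t)$, and the sign-mirror variants of each. Every rule is length non-increasing, every template is synchronously regular when padded, and termination holds under the shortlex order with $a \prec \bar a \prec t$: for each length-preserving rule the first differing position has $a$ on the right while the left has $t$ or $\bar a$, so the right is shortlex-smaller.

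For part (b), I would take $u_n := (a^n t \bar a^n t)^2$, of length $4n + 4$, representing $(2n, -2n, 0) \in G$, whose $A$-geodesic $a^{2n} t \bar a^{2n} t$ has length $4n + 2$. The key combinatorial fact is a length-counting lemma: any $v \in A^\ast$ representing $(2n, -2n, 0)$ with $2q$ occurrences of $t$ satisfies $|v| \geq 4n + 2q$, because the $a/\bar a$-letters split into $2q + 1$ runs $R_0, \ldots, R_{2q}$ with $\sum_{i \text{ even}} \sigma(R_i) = 2n$ and $\sum_{i \text{ odd}} \sigma(R_i) = -2n$, whence $\sum_i |R_i| \geq \sum_i |\sigma(R_i)| \geq 4n$. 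A parity refinement shows $|v| \in \{4n + 2, 4n + 4, \ldots\}$. Consequently any witness $v_n$ of $u_n$ must have $|v_n| = 4n + 2$ and shape $v_n = a^p t \bar a^{2n} t a^q$ with $p + q = 2n$.

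To finish, I would rule out every such $p$ via direct semidirect-product distance computations. For $p = n$: $v_n(3n + 1) = (n, -2n, 1)$ (end of $v_n$'s vertical phase) while $u_n(3n + 1) = (2n - 1, -n, 0)$ (inside $u_n$'s third, horizontal, phase), and one computes $u_n(3n + 1)^{-1} v_n(3n + 1) = (-(n-1), -n, 1)$, of $G$-length $2n$. For $p < n$, $v_n$ turns vertical before $u_n$ finishes its first horizontal run, giving $d_G(v_n(n + 1), u_n(n + 1)) = 2(n - p) + 2$ (achieving $2n + 2$ at $p = 0$). For $p > n$, a symmetric argument at time $p + 2n + 2$ (end of $v_n$'s vertical phase) yields distance $\approx 2(p - n)$. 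In every case some time witnesses a distance exceeding $k$ whenever $n > k/2 + 1$, so no $k$-fellow traveling witness of $u_n$ exists for $n$ large enough. Since $k$ was arbitrary, FFTP fails for $(G, A)$. The main technical obstacle is exactly these distance computations in the semidirect product; the counting lemma narrows the witness candidates to the one-parameter family $a^p t \bar a^{2n} t a^q$, after which the verification is a finite case analysis.
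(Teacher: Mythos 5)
Your part (a) is correct and is essentially the paper's construction: the paper takes the same normal form set ($a^i t a^j t$ with $j \neq 0$, plus prefixes), but verifies it via shortlex automaticity (normal forms differing by a generator $4$-fellow travel) and then invokes Otto's result that a shortlex automatic structure yields a length non-increasing regular CP-RS, before listing exactly the rule templates you wrote down; your direct verification (termination via shortlex, synchronous regularity of each padded template) is a fine substitute, modulo the omitted template $a^m t a^n t\,t \to a^m t a^n$, which your ``one template per extension of a normal form'' principle clearly covers. For part (b) the paper does not reprove anything: it simply cites Elder for the failure of FFTP, so you are attempting strictly more --- and here there is a genuine gap.

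Your reduction is right up to the point of the candidate witnesses: the counting/parity lemma does force any witness of $u_n$ to have length $4n+2$ and shape $v_p = a^p t \bar a^{2n} t a^{q}$ with $p+q = 2n$, and your $p = n$ computation is correct. But the endgame fails in two ways. First, the $p > n$ case is computed incorrectly: at time $p + 2n + 2$ one has $v_p(p+2n+2) = (p,-2n,0)$ while $u_n(p+2n+2) = (2n, -(p-1), 1)$, and the distance is $4n + 2 - 2p = 2(2n-p)+2$, not $2(p-n)$; in the extreme case $p = 2n$ (the geodesic normal form itself) the distance at your chosen time is $2$. Second, even taking all three of your stated bounds at face value, they do not cover $p$ near $n$: for $p = n-1$ your time-$(n+1)$ bound is $4$, and for $p = n+1$ your claimed bound is $2$, so the conclusion ``in every case some time witnesses a distance exceeding $k$'' does not follow for any fixed large $k$. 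The argument is repairable with two well-chosen times. At $i = 2n+2$ one has $u_n(i) = (n,-n,0)$ and $v_p(i) = (p, -(2n+1-p), 1)$ (for $p \geq 1$), giving distance $2(n-p)+2$ when $p \leq n$ and $2(p-n)$ when $p > n$; at $i = 3n+1$ one has $u_n(i) = (2n-1,-n,0)$, and the distance is $2n+2$ for every $p \leq n-1$ and $4n-2p$ for $n \leq p \leq 2n-1$. For each $p$ at least one of these two distances is at least $n$, so no $v_p$ can $k$-fellow travel $u_n$ once $n > k$, which is what you need. Alternatively, you could do exactly as the paper does and cite Elder for the failure of FFTP, in which case only part (a) needs proof.
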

\begin{proof}
\begin{figure}
\begin{tikzpicture}[scale = 1.5, every node/.style={circle, draw, fill= black, scale = 0.4}]
\foreach \i in {-2, -1, ..., 2}{
	\draw [line width = 2pt, -{Latex[length = 3mm, width = 2mm]}] (\i, -2.5) -- (\i, -1.75);
	\draw [line width = 2pt] (\i, 2.25) -- (\i, 2.5);
	\foreach \j in {-2, -1, ..., 1}{
		\draw [line width = 2pt, -{Latex[length = 3mm, width = 2mm]}] (\i, \j + 0.25) -- (\i, \j + 1.25);
	}
}
\foreach \i in {-2, -1, ..., 2}{
	\draw [preaction = {draw, line width = 3pt, white}, -{Latex[length = 3mm, width = 2mm]}] (-2.5, \i) -- (-2.25, \i);
	\draw [preaction = {draw, line width = 3pt, white}] (1.75, \i) -- (2.5, \i);
	\foreach \j in {-2, -1, ..., 1}{
		\draw [preaction = {draw, line width = 3pt, white}, -{Latex[length = 3mm, width = 2mm]}] (\j - 0.25, \i) -- (\j + 0.75, \i);
	}
}
\draw [line width = 2pt] (-2.5, 0) -- (2.5, 0);
\foreach \i in {-2, -1, ..., 2}{
	\foreach \j in {-2, -1, ..., 2}{
		\draw [line width = 2pt, {Latex[length = 2mm, width = 2mm]}-{Latex[length = 2mm, width = 2mm]}] (\i - 0.25, \j) -- (\i, \j + 0.25);
		\node at (\i - 0.25, \j){};
		\node at (\i, \j + 0.25){};
	}
}
\node[circle, draw, scale = 2] at (-0.25, 0){};
\node (1) at (-0.25, 0)[label = below:\Huge $1_G$]{};

\end{tikzpicture}
\caption{A portion of the Cayley graph $\Gamma_{G, A}$, with spanning tree of normal forms drawn in bold. All horizontal and vertical arrows are labeled by $a$ and point right or up, while all diagonal arrows are labeled by $t$ and are bidirectional.}
\label{fig: semidirect}
\end{figure}
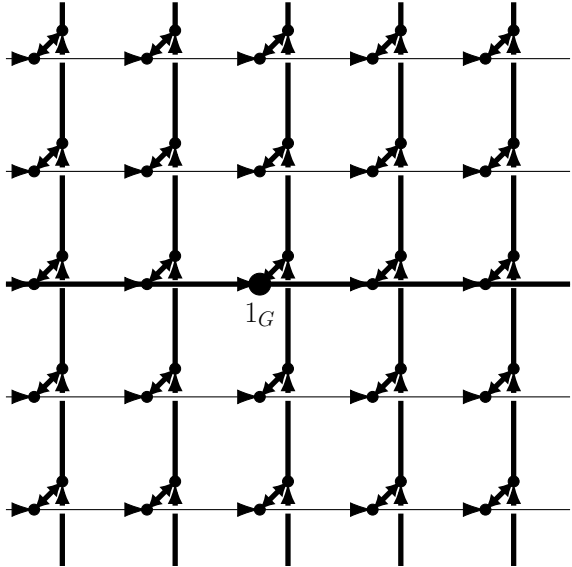
Consider $G = \bb{Z}^2 \rtimes \bb{Z}_2$ and $A = \set{a, t}$. Elder uses this example and proves that the pair $(G, A)$ does not have FFTP in \cite{Elder2005}. Consider the shortlex normal form set $\cali{N}$ with $a < a^{-1} < t < t^{-1}$ as our ordering on $A^{\pm 1}$. An illustration of part of the Cayley graph, with normal forms indicated, is given as Figure \ref{fig: semidirect}. We see that $\cali{N}$ consists of $\set{a^ita^jt \mid i, j \in \bb{Z}, j \neq 0}$ and all prefixes of words in this language. Notably, $\cali{N}$ is regular. Further, whenever $u, v \in \cali{N}$ with $d(u, v) = 1$, we have that $u$ and $v$ 4-fellow travel. The only non-trivial case to check for 4-fellow traveling is when $u = a^ita^jt$ and $v = a^{i + 1}ta^jt$, with $v =_G ua$. In this case, if $i$ is non-negative, $u$ and $v$ follow a common path of length $i$, then have a word difference of $ta$ for a single step, then have a word difference of $tata$ for $j$ letters, and finally have a word difference of $a$ at the final pair of vertices. If $i$ is negative, we have a similar scenario. All other options for $u$ and $v$ have either $u$ as a prefix of $v$ or $v$ as a prefix of $u$, so $u$ and $v$ 1-fellow travel. Thus, we have a regular language of shortlex normal forms, with any pair of normal forms that differ by a single edge $4$-fellow traveling, hence a shortlex automatic structure \cite[Theorem 2.3.5]{ECHLPT1992}. Every shortlex automatic structure is a length non-increasing regular CP-RS (this follows from the proof of \cite[Lemma 5.1]{Otto1999}), so this example has a length non-increasing regular CP-RS but not FFTP. The set of rules $R$ for this CP-RS is given below, for sake of completeness:
\begin{align*}
a^ia^{-\text{sgn}(i)} &\to a^{i - \text{sgn}(i)} && \text{ for all } i \in \bb{Z} \setminus \set{0}\\
a^ita^ja^{-\text{sgn}(j)} &\to a^ita^{j - \text{sgn}(j)} && \text{ for all } i \in \bb{Z} \text{ and } j \in \bb{Z} \setminus \set{0}\\
a^itt &\to a^i && \text{ for all } i \in \bb{Z} \\
a^ita^jt^2 &\to a^ita^j && \text{ for all } i \in \bb{Z} \text{ and } j \in \bb{Z} \setminus \set{0} \\
a^ita^jta^\epsilon &\to a^{i + \epsilon}ta^jt && \text{ for all } i \in \bb{Z} \text{, } j \in \bb{Z} \setminus \set{0} \text{, and } \epsilon \in \set{1, -1} \\
\end{align*}
It is worth noting that this CP-RS is \propL, hence $G$ is geodesically autostackable with the given generating set.
\end{proof}

\section*{Acknowledgements}
The author received partial support from Simons Foundation Collaboration Grant number 581433.

\end{document}